\documentclass[10pt]{NSP1}
\usepackage{url,floatflt}
\usepackage{helvet,times}
\usepackage{psfig,graphics}
\usepackage{mathptmx,amsmath,amssymb,bm}
\usepackage{float}
\usepackage[bf,hypcap]{caption}

\tolerance=1
\emergencystretch=\maxdimen
\hyphenpenalty=10000
\hbadness=10000

\topmargin=0.00cm

\def\firstpage{1}
\setcounter{page}{\firstpage}

%%%%%%%%%%%%%%
\def\Lip{\operatorname{Lip}}
\def\C{C}

\def\Linf{L^\infty}
\def\diam{\operatorname{diam}}
\newcommand{\argmax}[1]{\underset{#1}{\operatorname{argmax} \,}}
\newcommand{\argmin}[1]{\underset{#1}{\operatorname{argmin} \,}}
\newcommand{\esssup}[1]{\underset{#1}{\operatorname{ess\,sup} \,}}
%%%%%%%%%%%%%

\begin{document}

\titlefigurecaption{{\large \bf \rm Progress in Fractional Differentiation and Applications}\\ {\it\small An International Journal}}

\title{Approximation of Fractional Order \\ Conflict-Controlled Systems}

\author{Mikhail Gomoyunov{$^{1,2}$}}
\institute{Krasovskii Institute of Mathematics and Mechanics, Ural Branch of Russian Academy of Sciences, Ekaterinburg 620990, Russia\and
Ural Federal University, Ekaterinburg 620002, Russia}

\titlerunning{Approximation of Fractional Order Conflict-Controlled Systems}
\authorrunning{M. Gomoyunov}

\mail{m.i.gomoyunov@gmail.com}

\received{26 Apr. 2018}
\revised{???}
\accepted{???}
\published{???}

\abstracttext{We consider a conflict-controlled dynamical system described by a nonlinear ordinary fractional differential equation with the Caputo derivative of an order $\alpha \in (0, 1).$ Basing on the finite-difference Gr\"{u}nwald-Letnikov formulas, we propose an approximation of the considered system by a system described by a functional-differential equation of a retarded type. A mutual aiming procedure between the initial conflict-controlled system and the approximating system is given that guarantees the desired proximity between their motions. This procedure allows to apply, via the approximating system, the results obtained for functional-differential systems for solving control problems in fractional order systems. Examples are considered, results of numerical simulations are presented.}

\keywords{Fractional differential equation, approximation, fractional order difference, control problem, disturbances.\\[2mm]
\textbf{2010 Mathematics Subject Classification}.  Primary 34A08, 49N70; Secondary 26A33, 93D30.}

\maketitle

\section{\; Introduction}

The paper deals with nonlinear ordinary fractional differential equations with the Caputo derivative of an order $\alpha \in (0, 1).$ The basics of the theory and numerical methods for such equations can be found, for example, in \cite{SGSamko_AAKilbas_OIMarichev_1993,IPodlubny_1999,AAKilbas_HMSrivastava_JJTrujillo_2006,KDiethelm_2010,CLi_FZeng_2015}, where also some of their applications are presented.

In the first part of the paper, we propose an approximation of a fractional differential equation by a functional-differential equation of a retarded type (see, e.g., \cite{JKHale_SMVLunel_1993}). The approximation is based on the finite-difference Gr\"{u}nwald-Letnikov (G.-L.) formulas for calculation of fractional derivatives (see, e.g., \cite[p.~386]{SGSamko_AAKilbas_OIMarichev_1993}), and it is arranged in such a way that the solution of the considered functional-differential equation approximates not the solution of the initial equation, but its Riemann-Liouville (R.-L.) fractional integral of the order $1 - \alpha.$ The proof of this fact relies on the uniform Lipschitz continuity of solutions of the approximating equation and the estimate of a fractional derivative of a quadratic Lyapunov function \cite{MIGomoyunov_2017}.

In the second part of the paper, we consider a conflict-controlled dynamical system which motion is described by a fractional differential equation. To apply the proposed approximation for this case, following \cite{NNKrasovskii_ANKotelnikova_2012} (see also \cite{NYuLukoyanov_ARPlaksin_2015,NYuLukoyanov_ARPlaksin_2016}), a mutual aiming procedure between the initial conflict-controlled system and the corresponding approximating functional-differential system is elaborated. It is based on the extremal shift rule (see, e.g., \cite[\S\S~2.4, 8.2]{NNKrasovskii_AISubbotin_1988} and also \cite{MIGomoyunov_2017}) and guarantees the desired proximity between systems' motions. This procedure allows to apply, via the approximating system, the results obtained for functional-differential systems (see, e.g., \cite{YuSOsipov_1971,NYuLukoyanov_2001,NYuLukoyanov_2003}) for solving control problems in fractional order systems. It should be noted also that, according to \cite{NYuLukoyanov_ARPlaksin_2015} (see also references therein), the approximating system can be further approximated by a high-dimensional system of the usual ordinary differential equations.

The paper is organized as follows. In Sect.~\ref{sec_preliminaries}, we introduce the notations, recall the definitions of fractional order integrals, derivatives and differences and give some of their properties. In Sect.~\ref{sec_DE}, we consider a Cauchy problem for an ordinary differential equation with the Caputo fractional derivative. In Sect.~\ref{sec_approximation_DE}, we derive an approximating functional-differential equation, establish its properties and prove the corresponding approximation theorem. The obtained results are illustrated by an example in Sect.~\ref{sec_Example_1}. In Sect.~\ref{sec_DS}, we consider a conflict-controlled fractional order dynamical system and introduce the approximating dynamical system. A mutual aiming procedure that ensures the desired proximity between motions of these systems is proposed in Sect.~\ref{sec_Procedure}, the corresponding approximation theorem is proved. In Sect.~\ref{sec_Example_2}, an illustrating example is considered. Concluding remarks are given in Sect.~\ref{sec_Conclusion}.

\section{\; Preliminaries} \label{sec_preliminaries}

\subsection{\; Notations} \label{subsec_notations}

Let numbers $\alpha \in (0, 1),$ $T > 0$ and $n \in \mathbb{N}$ be fixed throughout the paper. Let $\mathbb{R}^n$ be the $n$-dimen\-sional Euclidian space with the scalar product $\langle \cdot, \cdot \rangle$ and the norm $\|\cdot\|.$ By $B(r) \subset \mathbb{R}^n$ for $r \geq 0,$ we denote the closed ball with the center in the origin and the radius $r.$ The segment $[0, T] \subset \mathbb{R}$ is assumed to be endowed with the Lebesgue measure. By $\Linf = \Linf([0, T], \mathbb{R}^n),$ we denote the space of (classes of equivalence of) essentially bounded measurable functions $x: [0, T] \rightarrow \mathbb{R}^n$ with the norm
\begin{equation*}
    \|x(\cdot)\|_\infty = \esssup{t \in [0, T]} \|x(t)\|.
\end{equation*}
Let $\C = \C([0, T], \mathbb{R}^n)$ be the space of continuous functions $x: [0, T] \rightarrow \mathbb{R}^n$ with the norm $\|\cdot\|_\infty.$ By $\Lip^0 = \Lip^0([0, T], \mathbb{R}^n)$ we denote the set of Lipschitz continuous functions $x(\cdot) \in \C$ such that $x(0) = 0.$ By $\Lip_L^0 = \Lip_L^0([0, T], \mathbb{R}^n)$ for $L \geq 0,$ we denote the set of functions $x(\cdot) \in \Lip^0$ which are Lipschitz continuous with the constant $L.$

\subsection{\; Fractional Order Integral and Derivatives} \label{subsec_RL_integral}
Let us recall the definitions of the R.-L. fractional integral, R.-L. and Caputo fractional derivatives and give some of their properties, which are used in the paper below.

\begin{definition}[see {\cite[Definition~2.1]{SGSamko_AAKilbas_OIMarichev_1993}}] \
    For a function $\varphi: [0, T] \rightarrow~\mathbb{R}^n,$ the {\rm(}left-sided{\rm)} R.-L. fractional integral of the order $\alpha$ is defined by
    \begin{equation*}
        (I^\alpha \varphi) (t) = \frac{1}{\Gamma(\alpha)} \int_{0}^{t} \frac{\varphi(\tau)}{(t-\tau)^{1-\alpha}} d\tau, \quad t \in [0, T],
    \end{equation*}
    where $\Gamma(\cdot)$ is the Euler gamma-function {\rm(}see, e.g., {\rm\cite[(1.54)]{SGSamko_AAKilbas_OIMarichev_1993})}.
\end{definition}

According to \cite[Proposition~1.1]{MIGomoyunov_2017}, the following proposition holds.
\begin{proposition} \label{prop_integral_properties} \
    For any $\varphi(\cdot) \in \Linf,$ the value $(I^\alpha \varphi)(t)$ is well defined for $t \in [0, T],$ and $(I^\alpha \varphi)(0) = 0.$ Moreover, there exists $H_\alpha > 0$ such that, for any $\varphi(\cdot) \in \Linf,$ the inequality
    \begin{equation*}
        \|(I^\alpha \varphi) (t) - (I^\alpha \varphi) (\tau)\|
        \leq H_\alpha \|\varphi(\cdot)\|_\infty |t - \tau|^\alpha, \quad t, \tau \in [0, T],
    \end{equation*}
    is valid. In particular, $(I^\alpha \varphi)(\cdot) \in \C$ for any $\varphi(\cdot) \in \Linf.$
\end{proposition}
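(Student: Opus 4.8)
The plan is to treat the well-definedness and the value at the origin first, and then derive the H\"older-type estimate, from which continuity follows automatically. For the first part, I would observe that since $\alpha \in (0, 1)$ the singular kernel $(t - \tau)^{\alpha - 1}$ is integrable on $[0, t],$ with $\int_0^t (t - \tau)^{\alpha - 1} \, d\tau = t^\alpha / \alpha.$ Bounding $\|\varphi(\tau)\|$ by $\|\varphi(\cdot)\|_\infty$ for almost every $\tau$ then shows that the defining integral converges absolutely and that $\|(I^\alpha \varphi)(t)\| \leq \|\varphi(\cdot)\|_\infty t^\alpha / \Gamma(\alpha + 1).$ Taking $t = 0,$ where the domain of integration collapses, gives $(I^\alpha \varphi)(0) = 0$ directly.

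For the estimate, without loss of generality take $0 \leq \tau < t \leq T$ and split the integral defining $(I^\alpha \varphi)(t)$ at $\tau,$ writing the difference as
\begin{equation*}
    (I^\alpha \varphi)(t) - (I^\alpha \varphi)(\tau) = \frac{1}{\Gamma(\alpha)} \int_0^\tau \varphi(s) \bigl( (t - s)^{\alpha - 1} - (\tau - s)^{\alpha - 1} \bigr) \, ds + \frac{1}{\Gamma(\alpha)} \int_\tau^t \varphi(s) (t - s)^{\alpha - 1} \, ds.
\end{equation*}
The second integral is bounded in norm by $\|\varphi(\cdot)\|_\infty (t - \tau)^\alpha / \Gamma(\alpha + 1)$ via the same elementary computation as above. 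For the first integral, the key observation is that $x \mapsto x^{\alpha - 1}$ is decreasing on $(0, \infty),$ so for $s < \tau$ the bracketed factor is nonpositive, and its absolute value integrates to
\begin{equation*}
    \int_0^\tau \bigl( (\tau - s)^{\alpha - 1} - (t - s)^{\alpha - 1} \bigr) \, ds = \frac{\tau^\alpha - t^\alpha + (t - \tau)^\alpha}{\alpha}.
\end{equation*}

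Here the main, though still elementary, point is to control $\tau^\alpha - t^\alpha + (t - \tau)^\alpha$ by $(t - \tau)^\alpha$; this follows from the subadditivity of the concave map $x \mapsto x^\alpha$ on $[0, \infty),$ which gives $t^\alpha - \tau^\alpha \leq (t - \tau)^\alpha$ and hence forces the whole expression to lie between $0$ and $(t - \tau)^\alpha.$ Combining the two pieces yields the claimed inequality with $H_\alpha = 2 / \Gamma(\alpha + 1),$ and the asserted continuity $(I^\alpha \varphi)(\cdot) \in \C$ is then immediate. I expect the only genuine subtlety to be the handling of the difference of the two singular kernels in the first integral---specifically, recognizing that the monotonicity of the power function makes the integrand sign-definite and that the resulting elementary integrals telescope into an expression tamed by subadditivity; everything else is a routine domination argument.
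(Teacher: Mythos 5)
Your proof is correct, and the estimate it produces (with $H_\alpha = 2/\Gamma(\alpha+1)$) is exactly what is needed. Note that the paper itself gives no proof of this proposition --- it is imported by citation from Proposition~1.1 of the author's earlier work \cite{MIGomoyunov_2017} --- so there is no in-paper argument to compare against; your self-contained derivation (splitting the increment at $\tau$, using monotonicity of $x \mapsto x^{\alpha-1}$ to make the first integrand sign-definite, and taming $\tau^\alpha - t^\alpha + (t-\tau)^\alpha$ by subadditivity of $x \mapsto x^\alpha$) is the standard route and supplies precisely the missing details.
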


\begin{proposition} \label{Prop_integral_properties_2} \
    Let a function $\varphi(\cdot) \in \C([0, T], \mathbb{R})$ be non-decreasing and non-negative. Then the function $(I^\alpha \varphi)(\cdot)$ is non-decreasing.
\end{proposition}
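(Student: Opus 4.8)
The plan is to remove the dependence of the integration domain on $t$ by a linear change of variables, after which the monotonicity becomes transparent. Specifically, for $t \in (0, T]$ I would substitute $\tau = t s$ with $s \in [0, 1]$ in the defining integral. Since $d\tau = t\, ds$ and $t - \tau = t(1 - s)$, this yields
\begin{equation*}
    (I^\alpha \varphi)(t) = \frac{t^\alpha}{\Gamma(\alpha)} \int_0^1 \frac{\varphi(t s)}{(1 - s)^{1 - \alpha}}\, ds,
\end{equation*}
and the same formula holds trivially at $t = 0$, since $(I^\alpha \varphi)(0) = 0$ by Proposition~\ref{prop_integral_properties}.

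With the integration domain now fixed, I would argue pointwise in $s$. Take $0 \leq t_1 \leq t_2 \leq T$. For each fixed $s \in [0, 1)$ the argument increases from $t_1 s$ to $t_2 s$, so $\varphi(t_1 s) \leq \varphi(t_2 s)$ by the assumed monotonicity of $\varphi$; multiplying by the non-negative weight $(1 - s)^{\alpha - 1}$ preserves the inequality. Integrating over $s \in [0, 1]$ gives
\begin{equation*}
    \int_0^1 \frac{\varphi(t_1 s)}{(1 - s)^{1 - \alpha}}\, ds \leq \int_0^1 \frac{\varphi(t_2 s)}{(1 - s)^{1 - \alpha}}\, ds,
\end{equation*}
and both integrals are non-negative because $\varphi \geq 0$.

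Finally I would combine this with the elementary fact that $t \mapsto t^\alpha$ is non-decreasing and non-negative for $\alpha \in (0, 1)$. Since the product of two non-negative non-decreasing quantities is again non-decreasing, chaining $t_1^\alpha \leq t_2^\alpha$ with the displayed integral inequality (first replacing $t_1^\alpha$ by $t_2^\alpha$ using non-negativity of the integral, then enlarging the integral using non-negativity of $t_2^\alpha$) yields $(I^\alpha \varphi)(t_1) \leq (I^\alpha \varphi)(t_2)$, as required.

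I do not anticipate a genuine obstacle here: the only point requiring a little care is the justification of the substitution together with the convergence of the resulting integral, but both are covered by the integrability of $(1 - s)^{\alpha - 1}$ on $[0, 1]$ (equivalently, by the well-definedness asserted in Proposition~\ref{prop_integral_properties}). An alternative route would be to subtract the two defining integrals directly and estimate the difference of the kernels, but that approach is more cumbersome precisely because both the integration limit and the singular kernel depend on $t$; the substitution sidesteps this entirely.
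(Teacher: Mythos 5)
Your proof is correct, but it takes a different route from the paper's. You rescale the integration variable ($\tau = ts$) to freeze the domain at $[0,1]$, writing $(I^\alpha\varphi)(t) = \frac{t^\alpha}{\Gamma(\alpha)}\int_0^1 \varphi(ts)(1-s)^{\alpha-1}\,ds$, and then observe that each factor is non-negative and non-decreasing in $t$ (the integrand pointwise in $s$ by monotonicity of $\varphi$, the prefactor $t^\alpha$ elementarily), so the product is non-decreasing. The paper instead does exactly what you flag as the ``more cumbersome'' alternative: it subtracts $(I^\alpha\varphi)(\tau)$ from $(I^\alpha\varphi)(t)$ directly, splits the difference into the integral over $[\tau,t]$ plus the integral of $\varphi(\xi)\bigl((t-\xi)^{\alpha-1}-(\tau-\xi)^{\alpha-1}\bigr)$ over $[0,\tau]$, and bounds each piece from below by freezing $\varphi$ at the value $\varphi(\tau)$ --- using that $\varphi$ is below $\varphi(\tau)$ where the kernel difference is negative and above it where the new kernel is positive. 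That computation is heavier but yields the explicit quantitative lower bound $(I^\alpha\varphi)(t)-(I^\alpha\varphi)(\tau)\ge \varphi(\tau)(t^\alpha-\tau^\alpha)/\Gamma(\alpha+1)$, whereas your argument gives only the qualitative monotonicity --- which is all the proposition asserts --- in a shorter and more transparent way. Both proofs use the two hypotheses (monotonicity and non-negativity) in full, and your justification of the substitution via integrability of $(1-s)^{\alpha-1}$ is adequate.
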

\begin{proof} \
    Let $t, \tau \in [0, T]$ and $t > \tau.$ We have
    \begin{equation*}
        (I^\alpha \varphi) (t) - (I^\alpha \varphi) (\tau)
        = \frac{1}{\Gamma(\alpha)} \int_{\tau}^{t} \frac{\varphi(\xi)}{(t - \xi)^{1 - \alpha}} d \xi
        + \frac{1}{\Gamma(\alpha)} \int_{0}^{\tau} \varphi(\xi) \big( (t - \xi)^{\alpha - 1} - (\tau - \xi)^{\alpha - 1} \big) d \xi.
    \end{equation*}
    Since $\varphi(\cdot)$ is non-decreasing, for the first term, we obtain
    \begin{equation*}
            \frac{1}{\Gamma(\alpha)} \int_{\tau}^{t} \frac{\varphi(\xi)}{(t - \xi)^{1 - \alpha}} d \xi
            \geq \frac{\varphi(\tau)}{\Gamma(\alpha)} \int_{\tau}^{t} (t - \xi)^{\alpha - 1} d \xi
            = \frac{\varphi(\tau) (t - \tau)^\alpha}{\Gamma(\alpha + 1)},
    \end{equation*}
    and, for the second term, we derive
    \begin{equation*}
        \frac{1}{\Gamma(\alpha)} \int_{0}^{\tau} \varphi(\xi) \big( (t - \xi)^{\alpha - 1} - (\tau - \xi)^{\alpha - 1} \big) d \xi
        \geq \frac{\varphi(\tau)}{\Gamma(\alpha)} \int_{0}^{\tau} \big( (t - \xi)^{\alpha - 1} - (\tau - \xi)^{\alpha - 1} \big) d \xi
        = \frac{\varphi(\tau) ( t^\alpha - \tau^\alpha - (t - \tau)^\alpha)}{\Gamma(\alpha + 1)}.
    \end{equation*}
    Therefore, taking into account that $\varphi(\cdot)$ is non-negative, we deduce
    \begin{equation*}
            (I^\alpha \varphi) (t) - (I^\alpha \varphi) (\tau)
            \geq \varphi(\tau) (t^\alpha - \tau^\alpha) / \Gamma(\alpha + 1) \geq 0.
    \end{equation*}
    The proposition is proved.
\end{proof}

\begin{definition}[see {\cite[Definition~2.2]{SGSamko_AAKilbas_OIMarichev_1993}}] \
    For a function $x: [0, T] \rightarrow~\mathbb{R}^n,$ the {\rm(}left-sided{\rm)} R.-L. fractional derivative of the order $\alpha$ is defined by
    \begin{equation*}
        (D^\alpha x) (t)
        = \frac{1}{\Gamma(1 - \alpha)} \frac{d}{dt} \int_{0}^{t} \frac{x(\tau)}{(t - \tau)^{\alpha}} d\tau, \quad t \in [0, T].
    \end{equation*}
\end{definition}

Let us denote by $I^\alpha (\Linf)$ the set of functions $x: [0, T] \rightarrow \mathbb{R}^n$ represented by the R.-L. fractional integral of the order $\alpha$ of a function $\varphi(\cdot) \in \Linf$: $x(t) = (I^\alpha \varphi)(t),$ $t \in [0, T].$

The next two propositions follows from \cite[Proposition~1.2]{MIGomoyunov_2017}.
\begin{proposition} \label{prop_derivative_properties} \
    For any $x(\cdot) \in I^\alpha(\Linf),$ the value $(D^\alpha x)(t)$ is well defined for almost every $t \in [0, T],$ and the inclusion $(D^\alpha x)(\cdot) \in \Linf$ is valid. Moreover, for any $\varphi(\cdot) \in \Linf,$ the equality $\varphi(t) = (D^\alpha x)(t)$ holds for almost every $t \in [0, T]$ if and only if $(I^\alpha \varphi)(t) = x(t),$ $t \in [0, T].$
\end{proposition}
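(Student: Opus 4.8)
The plan is to reduce the whole statement to the single composition identity $(I^{1-\alpha} I^\alpha \varphi)(t) = \int_0^t \varphi(\tau)\,d\tau$ for $\varphi(\cdot)\in\Linf$ and $t\in[0,T]$, where $I^{1-\alpha}$ denotes the R.-L. integral of the order $1-\alpha$ (the same formula with $1-\alpha$ in place of $\alpha$). First I would note that the R.-L. derivative factors as $(D^\alpha x)(t) = \tfrac{d}{dt}(I^{1-\alpha} x)(t)$, and that, by the hypothesis $x(\cdot)\in I^\alpha(\Linf)$, there is a fixed $\psi(\cdot)\in\Linf$ with $x = I^\alpha \psi$ on $[0,T]$. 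Granting the composition identity, $(I^{1-\alpha} x)(t) = \int_0^t \psi(\tau)\,d\tau$ is an absolutely continuous function, so the Lebesgue differentiation theorem (applicable since $\psi(\cdot)\in\Linf\subset L^1$) yields $(D^\alpha x)(t) = \psi(t)$ for almost every $t\in[0,T]$. This at once gives that $(D^\alpha x)(t)$ is well defined a.e. and that $(D^\alpha x)(\cdot) = \psi(\cdot)\in\Linf$, settling the first part.

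The core of the proof is the composition identity, which I would establish by Fubini's theorem. Writing $(I^{1-\alpha} I^\alpha \varphi)(t)$ as a double integral over the triangle $\{0 \le \tau \le s \le t\}$ and interchanging the order of integration, the inner integral $\int_\tau^t (t-s)^{-\alpha}(s-\tau)^{\alpha-1}\,ds$ becomes, after the substitution $s = \tau + (t-\tau)u$, the Beta integral $\int_0^1 u^{\alpha-1}(1-u)^{-\alpha}\,du = \mathrm{B}(\alpha, 1-\alpha) = \Gamma(\alpha)\Gamma(1-\alpha)$; this constant cancels the normalizing factor $1/(\Gamma(\alpha)\Gamma(1-\alpha))$ and leaves exactly $\int_0^t \varphi(\tau)\,d\tau$. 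I expect the main obstacle to be the justification of the interchange: one must check that the kernel $(t-s)^{-\alpha}(s-\tau)^{\alpha-1}$ is integrable over the triangle against $\varphi$. Essential boundedness bounds $|\varphi|$ by $\|\varphi(\cdot)\|_\infty$, and the iterated integral of the kernel is finite precisely because both singularities, of orders $\alpha$ and $1-\alpha$, are integrable for $\alpha\in(0,1)$ (this is the very Beta integral above), so Fubini indeed applies.

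Finally I would deduce the equivalence from the identity. If $(I^\alpha \varphi)(t) = x(t)$ for all $t\in[0,T]$, then applying the identity with $\varphi$ in the role of $\psi$ gives $(D^\alpha x)(t) = \varphi(t)$ for almost every $t$. Conversely, if $\varphi(t) = (D^\alpha x)(t)$ almost everywhere, then, combined with $(D^\alpha x)(t) = \psi(t)$ almost everywhere, one obtains $\varphi(t) = \psi(t)$ almost everywhere; since the operator $I^\alpha$ depends only on the a.e.-equivalence class of its argument, $(I^\alpha \varphi)(t) = (I^\alpha \psi)(t) = x(t)$ for all $t\in[0,T]$. This closes both implications and completes the argument.
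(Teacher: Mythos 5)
Your proof is correct. Note that the paper does not prove this proposition at all; it simply defers to \cite[Proposition~1.2]{MIGomoyunov_2017}, so there is no in-paper argument to compare against. Your route --- establishing the semigroup identity $(I^{1-\alpha}I^{\alpha}\varphi)(t)=\int_0^t\varphi(\tau)\,d\tau$ via Fubini and the Beta integral, then writing $(D^{\alpha}x)(t)=\tfrac{d}{dt}(I^{1-\alpha}x)(t)$ and invoking the Lebesgue differentiation theorem on the resulting Lipschitz primitive of $\psi$ --- is the standard proof of exactly this fact, and all the steps (the Tonelli justification of the interchange, the deduction that $(D^{\alpha}x)=\psi$ a.e., and the two implications of the equivalence, including the observation that $I^{\alpha}$ depends only on the a.e.\ class of its argument) are sound.
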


\begin{proposition} \label{prop_derivative_properties_2} \
    For any $x(\cdot) \in \Lip^0,$ the inclusion $x(\cdot) \in I^\alpha(\Linf)$ is valid, and the value $(D^\alpha x) (t)$ is well defined for $t \in [0, T].$ Moreover, for any $\varphi(\cdot) \in \Linf$ such that $\varphi(t) = \dot{x}(t)$ for almost every $t \in [0, T],$ where $\dot{x}(t) = d x(t)/dt,$ the equality $(D^\alpha x)(t) = (I^{1 - \alpha} \varphi)(t),$ $t \in [0, T],$ holds. In particular, we have $(D^\alpha x)(\cdot) \in I^{1 - \alpha}(\Linf),$ $(D^\alpha x)(0) = 0$ and $(D^{1 - \alpha}(D^\alpha x))(t) = \dot{x}(t)$ for almost every $t \in [0, T].$
\end{proposition}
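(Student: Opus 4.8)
The plan is to reduce everything to the semigroup property of the Riemann--Liouville integral together with the two propositions already established. First I would use that a function $x(\cdot) \in \Lip^0$ is absolutely continuous and vanishes at the origin, so that by the fundamental theorem of calculus $x(t) = \int_0^t \dot{x}(\tau)\,d\tau$ for every $t \in [0, T]$, with $\dot{x}(\cdot)$ essentially bounded by the Lipschitz constant. Fixing any $\varphi(\cdot) \in \Linf$ with $\varphi(t) = \dot{x}(t)$ for almost every $t$, this reads $x = I^1 \varphi$, where $I^1$ denotes the ordinary integration operator.

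Next I would invoke the semigroup identity $I^\alpha I^{1 - \alpha} = I^1$, valid for $\varphi(\cdot) \in \Linf$ since $\alpha \in (0, 1)$; concretely this is a Fubini interchange of the iterated integral over the triangle $\{0 \leq r \leq s \leq t\}$, which is legitimate because the kernel $(s - r)^{-\alpha}$ is integrable and $\varphi(\cdot)$ is bounded. Setting $\psi(\cdot) := (I^{1 - \alpha} \varphi)(\cdot)$, Proposition~\ref{prop_integral_properties}, applied with the order $1 - \alpha \in (0, 1)$ in place of $\alpha$, gives $\psi(\cdot) \in \C \subset \Linf$, and the identity yields $x = I^\alpha \psi$. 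This establishes the inclusion $x(\cdot) \in I^\alpha(\Linf)$.

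For the derivative, I would rewrite $(D^\alpha x)(t) = \frac{d}{dt}(I^{1 - \alpha} x)(t)$ directly from the definition, and then use the same semigroup computation in the other order, $I^{1 - \alpha} x = I^{1 - \alpha} I^1 \varphi = I^1 \psi$, that is, $(I^{1 - \alpha} x)(t) = \int_0^t \psi(s)\,ds$. Since $\psi(\cdot)$ is continuous, the fundamental theorem of calculus applies at every point, so the derivative exists for all $t \in [0, T]$ and equals $\psi(t) = (I^{1 - \alpha} \varphi)(t)$, which is exactly the claimed formula and simultaneously yields well-definedness on all of $[0, T]$.

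The three remaining assertions then follow at once: $(D^\alpha x)(\cdot) = I^{1 - \alpha} \varphi \in I^{1 - \alpha}(\Linf)$ by construction; $(D^\alpha x)(0) = (I^{1 - \alpha} \varphi)(0) = 0$ by the $(I^\alpha \varphi)(0) = 0$ part of Proposition~\ref{prop_integral_properties} used with order $1 - \alpha$; and, since $I^{1 - \alpha} \varphi \in I^{1 - \alpha}(\Linf)$, Proposition~\ref{prop_derivative_properties} used with order $1 - \alpha$ gives $D^{1 - \alpha}(D^\alpha x) = D^{1 - \alpha}(I^{1 - \alpha} \varphi) = \varphi = \dot{x}$ almost everywhere. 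I expect the only genuinely technical point to be the justification of the semigroup/Fubini interchange with the weakly singular kernel; once that is in hand, everything else is a direct application of the fundamental theorem of calculus and the cited propositions, invoked with the order $1 - \alpha$ rather than $\alpha$.
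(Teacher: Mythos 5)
Your argument is correct. Note, however, that the paper does not actually prove this proposition: it is stated as a consequence of \cite[Proposition~1.2]{MIGomoyunov_2017}, so there is no in-text proof to compare against. Your proof supplies the standard self-contained derivation that the citation stands in for: write $x = I^1\varphi$ by absolute continuity of a Lipschitz function vanishing at $0$, use the semigroup identity $I^\alpha I^{1-\alpha} = I^1$ (justified by Fubini with the Beta-function computation $\int_r^t (t-s)^{\alpha-1}(s-r)^{-\alpha}\,ds = \Gamma(\alpha)\Gamma(1-\alpha)$, which is the one technical point you correctly flag) to get $x = I^\alpha(I^{1-\alpha}\varphi) \in I^\alpha(\Linf)$, and then identify $(D^\alpha x)(t) = \frac{d}{dt}(I^{1-\alpha}x)(t) = \frac{d}{dt}\int_0^t (I^{1-\alpha}\varphi)(s)\,ds = (I^{1-\alpha}\varphi)(t)$ everywhere, since $I^{1-\alpha}\varphi$ is continuous by Proposition~\ref{prop_integral_properties}. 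The three corollaries then follow exactly as you say, with Proposition~\ref{prop_derivative_properties} invoked at order $1-\alpha$ for the last one. Your implicit use of Propositions~\ref{prop_integral_properties} and~\ref{prop_derivative_properties} with $\alpha$ replaced by $1-\alpha$ is consistent with the paper's own practice (it uses $H_{1-\alpha}$ from Proposition~\ref{prop_integral_properties} in the proof of Proposition~\ref{prop_RL_derivative_Lipshitz}). I see no gap.
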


\begin{proposition} \label{prop_RL_derivative_Lipshitz} \
    For any $L > 0,$ there exist $K > 0$ and $M > 0$ such that, for any $x(\cdot) \in \Lip_L^0,$ the inequalities below hold:
    \begin{equation*} \label{prop_RL_derivative_Lipshitz_main}
        \| (D^\alpha x) (t) - (D^\alpha x) (\tau)\| \leq K |t - \tau|^{1 - \alpha}, \quad
        \| (D^\alpha x) (t) \| \leq M, \quad t, \tau \in [0, T].
    \end{equation*}
\end{proposition}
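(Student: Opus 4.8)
The plan is to reduce both estimates to the H\"older-continuity property of the Riemann--Liouville integral established in Proposition~\ref{prop_integral_properties}. The starting observation is that, by Proposition~\ref{prop_derivative_properties_2}, every $x(\cdot) \in \Lip^0$ satisfies $(D^\alpha x)(t) = (I^{1 - \alpha} \dot{x})(t)$ for all $t \in [0, T],$ where $\dot{x}(\cdot)$ is any $\Linf$-representative of the almost-everywhere derivative of $x(\cdot).$ Thus the fractional derivative of a Lipschitz function is simply the fractional integral of order $1 - \alpha$ of its ordinary derivative, and the problem becomes one about the operator $I^{1 - \alpha}$ acting on the bounded function $\dot{x}(\cdot).$ Moreover, since $x(\cdot) \in \Lip_L^0,$ the derivative $\dot{x}(\cdot)$ exists for almost every $t \in [0, T]$ and satisfies $\|\dot{x}(t)\| \leq L,$ so that $\dot{x}(\cdot) \in \Linf$ with $\|\dot{x}(\cdot)\|_\infty \leq L.$ This is the only place where the Lipschitz hypothesis enters, and it is what makes the constants uniform over the whole class $\Lip_L^0.$

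Next I would apply Proposition~\ref{prop_integral_properties} with the order $1 - \alpha$ in place of $\alpha$ (the proposition holds verbatim for every order in $(0, 1),$ yielding a constant $H_{1 - \alpha} > 0$) to the function $\varphi(\cdot) = \dot{x}(\cdot).$ This gives
\begin{align*}
    \|(D^\alpha x)(t) - (D^\alpha x)(\tau)\|
    &= \|(I^{1 - \alpha} \dot{x})(t) - (I^{1 - \alpha} \dot{x})(\tau)\| \\
    &\leq H_{1 - \alpha} \|\dot{x}(\cdot)\|_\infty |t - \tau|^{1 - \alpha}
    \leq H_{1 - \alpha} L \, |t - \tau|^{1 - \alpha},
\end{align*}
so the first inequality holds with $K = H_{1 - \alpha} L.$ For the second inequality I would use that $(D^\alpha x)(0) = 0$ (again from Proposition~\ref{prop_derivative_properties_2}, or equivalently from $(I^{1 - \alpha} \dot{x})(0) = 0$ in Proposition~\ref{prop_integral_properties}) and apply the H\"older estimate just obtained with $\tau = 0,$ giving $\|(D^\alpha x)(t)\| \leq K |t|^{1 - \alpha} \leq K T^{1 - \alpha};$ hence $M = K T^{1 - \alpha} = H_{1 - \alpha} L T^{1 - \alpha}$ works. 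Both constants depend only on $L$ and the fixed data $\alpha, T,$ as required.

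Since each step is a direct invocation of an already-established property, I do not anticipate a genuine obstacle. The one point requiring care is the invocation of Proposition~\ref{prop_integral_properties} at the order $1 - \alpha$ rather than $\alpha$: one must note that the stated continuity property of the fractional integral is valid for any order in $(0, 1),$ and in particular for $1 - \alpha,$ so that a finite constant $H_{1 - \alpha}$ is indeed available. Everything else is routine bookkeeping of constants.
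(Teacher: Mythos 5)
Your proof is correct and follows essentially the same route as the paper: both represent $(D^\alpha x)(\cdot)$ as $(I^{1-\alpha}\dot{x})(\cdot)$ via Proposition~\ref{prop_derivative_properties_2}, apply the H\"older estimate of Proposition~\ref{prop_integral_properties} at order $1-\alpha$ with $\|\dot{x}(\cdot)\|_\infty \leq L$ to get $K = H_{1-\alpha}L$, and then use $(D^\alpha x)(0) = 0$ to obtain $M = KT^{1-\alpha}$. The constants and the logical steps match the paper's proof exactly.
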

\begin{proof} \
    Let $L > 0$ be fixed. Taking $H_{1-\alpha}$ from Proposition~\ref{prop_integral_properties}, we define $K = H_{1-\alpha} L,$ $M = K T^{1 - \alpha}.$ Let $x(\cdot) \in \Lip_L^0,$ and $\varphi(\cdot) \in \Linf$ be such that $\varphi(t) = \dot{x}(t)$ for almost every $t \in [0, T].$ Then $\|\varphi(\cdot)\|_\infty \leq L$ and, due to Proposition~\ref{prop_derivative_properties_2}, we have $(D^\alpha x)(t) = (I^{1 - \alpha} \varphi)(t),$ $t \in [0, T].$ Therefore, by the choice of $H_{1 - \alpha},$ we obtain
    \begin{equation*}
        \|(D^\alpha x)(t) - (D^\alpha x)(\tau)\| \leq H_{1 - \alpha} L |t - \tau|^{1 - \alpha} = K |t - \tau|^{1 - \alpha},
        \quad t, \tau \in [0, T].
    \end{equation*}
    Further, since $(D^\alpha x)(0) = 0$ according to Proposition~\ref{prop_derivative_properties_2}, we derive
    \begin{equation*}
        \|(D^\alpha x)(t)\| \leq K t^{1 - \alpha} \leq M, \quad t \in [0, T].
    \end{equation*}
    The proposition is proved.
\end{proof}

\begin{definition}[see {\cite[(2.4.1)]{AAKilbas_HMSrivastava_JJTrujillo_2006}}] \
    For a function $x: [0, T] \rightarrow \mathbb{R}^n,$ the {\rm(}left-sided{\rm)} Caputo fractional derivative of the order $\alpha$ is defined by
    \begin{equation} \label{def_Caputo}
        ({}^C D^\alpha x) (t)
        = \frac{1}{\Gamma(1 - \alpha)} \frac{d}{dt} \int_{0}^{t} \frac{x(\tau) - x(0)}{(t - \tau)^{\alpha}} d\tau, \quad t \in [0, T].
    \end{equation}
\end{definition}

Note that, for a function $x:[0, T] \rightarrow \mathbb{R}^n,$ if $x(0) = 0,$ then the Caputo and the R.-L. fractional derivatives coincide.

\subsection{\; Fractional Order Differences} \label{subsec_differences}
Let us recall the notion of the fractional order difference, on which the definition of the G.-L. fractional order derivative is based (see, e.g., \cite[\S20.4]{SGSamko_AAKilbas_OIMarichev_1993}), and prove some auxiliary statements, which are used in the paper below.

\begin{definition}[see {\cite[p.~385]{SGSamko_AAKilbas_OIMarichev_1993}}] \
    For a function $x: [0, T] \rightarrow~\mathbb{R}^n,$ the {\rm(}left-sided{\rm)} fractional difference of the order $\alpha$ with a step size $h > 0$ is defined by
    \begin{equation*} \label{Delta_h^alpha}
        (\Delta_h^\alpha x) (t) = \sum_{j = 0}^{[t/h]} (-1)^j \binom{\alpha}{j} x(t - j h), \quad t \in [0, T],
    \end{equation*}
    where the symbol $[\tau]$ means the integer part of $\tau \geq 0,$ and $\binom{\alpha}{j}$ are the binomial coefficients.
\end{definition}

Let us consider the function
\begin{equation} \label{p_alpha}
    p_\alpha (\tau) = \frac{1}{\Gamma(\alpha)} \sum_{0 \leq j < \tau} (- 1)^j \binom{\alpha}{j} \frac{1}{(\tau - j)^{1 - \alpha}},
    \quad \tau > 0.
\end{equation}
Note that the function $p_\alpha(\cdot)$ is measurable and, according to \cite[Lemma~20.1]{SGSamko_AAKilbas_OIMarichev_1993} (see also \cite[Lemma~2]{UWestphal_1974}), the following relations hold:
\begin{equation} \label{p_alpha_properties}
    \|p_\alpha(\cdot)\|_1 = \int_0^{\infty} |p_\alpha (\tau)| d \tau < \infty, \quad
    \int_0^{\infty} p_\alpha (\tau) d \tau = 1.
\end{equation}
Connection between the function $p_\alpha(\cdot)$ and the fractional difference of the order $\alpha$ is given in the following proposition (see also \cite[(20.30)]{SGSamko_AAKilbas_OIMarichev_1993}).
\begin{proposition} \label{prop_p_alpha} \
    Let $x(\cdot) \in I^\alpha (\Linf),$ and $\varphi(\cdot) \in \Linf$ be such that $(D^\alpha x)(t) = \varphi(t)$ for almost every $t \in [0, T].$ Then, for any $h > 0,$ we have
    \begin{equation} \label{prop_p_alpha_main}
        h^{- \alpha} (\Delta_h^\alpha x)(t)
        = \int_{0}^{t/h} \varphi(t - \tau h) p_\alpha(\tau) d \tau, \quad t \in [0, T].
    \end{equation}
\end{proposition}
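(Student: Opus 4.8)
The plan is to start from the representation $x(\cdot)=(I^\alpha\varphi)(\cdot)$, valid pointwise on $[0,T]$ by Proposition~\ref{prop_derivative_properties}, expand each term $x(t-jh)$ occurring in $(\Delta_h^\alpha x)(t)$ as an R.-L. integral, and then collapse the whole expression into a single integral against $\varphi(\cdot)$ by a change of variables followed by an interchange of a finite sum with the integral. Fix $t\in[0,T]$ and $h>0$; for every integer $j$ with $0\le j\le[t/h]$ one has $t-jh\ge 0$, so
\begin{equation*}
x(t-jh)=(I^\alpha\varphi)(t-jh)=\frac{1}{\Gamma(\alpha)}\int_0^{t-jh}\frac{\varphi(s)}{(t-jh-s)^{1-\alpha}}\,ds.
\end{equation*}

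The next step is the substitution $s=t-\tau h$, which maps $s\in(0,t-jh)$ onto $\tau\in(j,t/h)$, turns $t-jh-s$ into $(\tau-j)h$, and contributes a factor $h$ from $ds$; collecting the powers of $h$ gives
\begin{equation*}
x(t-jh)=\frac{h^\alpha}{\Gamma(\alpha)}\int_j^{t/h}\frac{\varphi(t-\tau h)}{(\tau-j)^{1-\alpha}}\,d\tau.
\end{equation*}
Multiplying by $(-1)^j\binom{\alpha}{j}h^{-\alpha}$ and summing over $j$ from $0$ to $[t/h]$ cancels the factor $h^\alpha$ and leaves
\begin{equation*}
h^{-\alpha}(\Delta_h^\alpha x)(t)=\frac{1}{\Gamma(\alpha)}\sum_{j=0}^{[t/h]}(-1)^j\binom{\alpha}{j}\int_j^{t/h}\frac{\varphi(t-\tau h)}{(\tau-j)^{1-\alpha}}\,d\tau.
\end{equation*}

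Since this is a finite sum and each integrand is integrable on $(j,t/h)$ — the singularity $(\tau-j)^{-(1-\alpha)}$ at $\tau=j$ is integrable because $\alpha>0$, and $\varphi(\cdot)$ is essentially bounded — the sum and integral may be interchanged by mere linearity, and each range $\int_j^{t/h}$ may be written as $\int_0^{t/h}$ of the integrand supported on $\{\tau>j\}$. This puts everything under one integral over $(0,t/h)$ with inner sum indexed by $\{0\le j\le[t/h],\ j<\tau\}$. The last step is to reconcile this index set with the definition \eqref{p_alpha} of $p_\alpha$: for $\tau\in(0,t/h]$ any integer $j$ with $j<\tau$ already satisfies $j<t/h$ and hence $j\le[t/h]$, so the constraints $\{0\le j\le[t/h],\ j<\tau\}$ and $\{0\le j<\tau\}$ coincide. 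The inner sum is therefore precisely $\Gamma(\alpha)\,p_\alpha(\tau)$, and \eqref{prop_p_alpha_main} follows.

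The calculation is short and the change of variables is routine; the only point requiring genuine care is the final bookkeeping that identifies $\sum_{0\le j\le[t/h],\,j<\tau}$ with $\sum_{0\le j<\tau}$ uniformly in $\tau\in(0,t/h]$, since it is this matching of summation constraints (together with the integrability of the kernel near each $\tau=j$ that legitimizes the interchange) that makes the binomial sum collapse exactly into the kernel $p_\alpha(\cdot)$.
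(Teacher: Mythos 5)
Your proposal is correct and follows essentially the same route as the paper's proof: expand each $x(t-jh)$ as $(I^\alpha\varphi)(t-jh)$ via Proposition~\ref{prop_derivative_properties}, change variables $s=t-\tau h$, interchange the finite sum with the integral, and identify the resulting inner sum with $\Gamma(\alpha)p_\alpha(\tau)$. The only cosmetic difference is the order of operations (the paper first unifies the integration domains with an indicator function and then substitutes once, while you substitute term by term and then merge), and your final bookkeeping matching $\{0\le j\le[t/h],\ j<\tau\}$ with $\{0\le j<\tau\}$ is exactly the step the paper handles with its $\chi$ function.
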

\begin{proof} \
    Let $t \in [0, T].$ Due to Proposition~\ref{prop_derivative_properties}, we derive
    \begin{equation*}
        (\Delta_h^\alpha x) (t)
        = \sum_{j = 0}^{[t/h]} (-1)^j \binom{\alpha}{j} \frac{1}{\Gamma(\alpha)} \int_{0}^{t - j h}
        \frac{\varphi(\xi)}{(t - j h - \xi)^{1 - \alpha}} d \xi
        = \int_0^t \varphi(\xi) \frac{1}{\Gamma(\alpha)} \sum_{j = 0}^{[t/h]} (-1)^j \binom{\alpha}{j}
        \frac{\chi(t - j h - \xi)}{(t - j h - \xi)^{1 - \alpha}} d \xi,
    \end{equation*}
    where we denote $\chi(\xi) = 0$ for $\xi \leq 0,$ and $\chi(\xi) = 1$ for $\xi > 0.$ Changing the variable $\xi = t - h \tau,$ we obtain
    \begin{equation*}
        \begin{array}{c}
            (\Delta_h^\alpha x) (t)
            \displaystyle = h^\alpha \int_{0}^{t/h} \varphi(t - \tau h) \frac{1}{\Gamma(\alpha)} \sum_{j = 0}^{[t/h]} (-1)^j \binom{\alpha}{j}
            \frac{\chi(\tau - j)}{(\tau - j)^{1 - \alpha}} d \tau \\[1em]
            \displaystyle = h^\alpha \int_{0}^{t/h} \varphi(t - \tau h) \frac{1}{\Gamma(\alpha)} \sum_{0 \leq j < \tau} (-1)^j \binom{\alpha}{j}
            \frac{1}{(\tau - j)^{1 - \alpha}} d \tau,
        \end{array}
    \end{equation*}
    wherefrom, according to (\ref{p_alpha}), we derive (\ref{prop_p_alpha_main}).
\end{proof}

The next result is an analog of Proposition~\ref{prop_RL_derivative_Lipshitz}.
\begin{proposition} \label{prop_GL_derivative_Lipshitz} \
    For any $L > 0,$ there exist $\overline{K} > 0$ and $\overline{M} > 0$ such that, for any $h > 0$ and any $x(\cdot) \in \Lip_L^0,$ the inequalities below are valid:
    \begin{equation} \label{prop_GLD_main}
        \|h^{- \alpha} (\Delta_h^\alpha x)(t) - h^{- \alpha} (\Delta_h^\alpha x)(\tau)\| \leq \overline{K} |t - \tau|^{1 - \alpha}, \quad
        \|h^{- \alpha} (\Delta_h^\alpha x)(t)\| \leq \overline{M}, \quad t, \tau \in [0, T].
    \end{equation}
\end{proposition}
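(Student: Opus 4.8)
The plan is to reduce everything to Proposition~\ref{prop_p_alpha}, which rewrites $h^{-\alpha}(\Delta_h^\alpha x)(\cdot)$ as an integral against the kernel $p_\alpha(\cdot)$, and then to exploit the H\"older continuity of the R.-L. derivative from Proposition~\ref{prop_RL_derivative_Lipshitz}. I would fix $L > 0$, take the constant $K$ furnished by Proposition~\ref{prop_RL_derivative_Lipshitz} for this $L,$ and set $\overline{K} = 2 K \|p_\alpha(\cdot)\|_1$ and $\overline{M} = \overline{K} T^{1 - \alpha},$ both finite by (\ref{p_alpha_properties}). Given $x(\cdot) \in \Lip_L^0,$ Proposition~\ref{prop_derivative_properties_2} gives $x(\cdot) \in I^\alpha(\Linf),$ so with $\varphi(\cdot) = (D^\alpha x)(\cdot)$ Proposition~\ref{prop_p_alpha} yields, for every $h > 0,$
\[
    h^{-\alpha} (\Delta_h^\alpha x)(t) = \int_0^{t/h} \varphi(t - s h)\, p_\alpha(s)\, ds, \quad t \in [0, T].
\]
Two features of $\varphi$ drive the estimates: it is H\"older continuous of exponent $1 - \alpha$ with constant $K$ (Proposition~\ref{prop_RL_derivative_Lipshitz}), and it vanishes at the origin, $\varphi(0) = (D^\alpha x)(0) = 0$ (Proposition~\ref{prop_derivative_properties_2}).

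For the first inequality in (\ref{prop_GLD_main}) I would assume without loss of generality that $t > \tau$ and split the difference as
\[
    h^{-\alpha} (\Delta_h^\alpha x)(t) - h^{-\alpha} (\Delta_h^\alpha x)(\tau) = \int_0^{\tau/h} \big( \varphi(t - s h) - \varphi(\tau - s h) \big) p_\alpha(s)\, ds + \int_{\tau/h}^{t/h} \varphi(t - s h)\, p_\alpha(s)\, ds.
\]
In the first integral the shift $s h$ cancels, so the H\"older estimate gives $\| \varphi(t - s h) - \varphi(\tau - s h) \| \leq K |t - \tau|^{1 - \alpha},$ and pulling this constant out leaves an integral bounded by $\|p_\alpha(\cdot)\|_1.$ For the second integral I would note that when $s \in [\tau/h, t/h]$ the argument satisfies $0 \leq t - s h \leq t - \tau$; combining $\varphi(0) = 0$ with the H\"older continuity then yields $\|\varphi(t - s h)\| \leq K (t - s h)^{1 - \alpha} \leq K |t - \tau|^{1 - \alpha},$ so this term is likewise at most $K |t - \tau|^{1 - \alpha} \|p_\alpha(\cdot)\|_1.$ Adding the two contributions produces the H\"older bound with $\overline{K}.$

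The second inequality in (\ref{prop_GLD_main}) then costs nothing: since $(\Delta_h^\alpha x)(0) = x(0) = 0,$ the H\"older bound just proved, taken at $\tau = 0,$ gives $\|h^{-\alpha} (\Delta_h^\alpha x)(t)\| \leq \overline{K} t^{1 - \alpha} \leq \overline{M}.$ The delicate point — and the place where I expect the main obstacle — is the second integral, because the integration window $[\tau/h, t/h]$ grows without bound as $h \to 0,$ so one cannot hope to control $\int_{\tau/h}^{t/h} |p_\alpha(s)|\, ds$ by a quantity of order $|t - \tau|^{1 - \alpha}$ uniformly in $h.$ What rescues the argument is that on this window $\varphi$ is evaluated only near the origin, where the vanishing $\varphi(0) = 0$ together with H\"older continuity makes $\varphi$ itself small; this smallness of the integrand, rather than smallness of the kernel's mass, is what yields constants uniform in $h.$
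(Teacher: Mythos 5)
Your proposal is correct and follows essentially the same route as the paper's proof: the same choice of constants $\overline{K} = 2K\|p_\alpha(\cdot)\|_1$ and $\overline{M} = \overline{K}T^{1-\alpha}$, the same splitting of the integral at $\tau/h$ via Proposition~\ref{prop_p_alpha}, the H\"older bound on the first piece, and the use of $\varphi(0)=0$ to make the integrand small on the second piece. Your closing remark about why the second integral must be handled through smallness of the integrand rather than smallness of the kernel's mass is exactly the point the paper's estimate exploits.
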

\begin{proof} \
    Let $L > 0$ be fixed. Let us choose $K > 0$ by Proposition~\ref{prop_RL_derivative_Lipshitz} and define $\overline{K} = 2 K \|p_\alpha(\cdot)\|_1,$ $\overline{M} = \overline{K} T^{1 - \alpha}.$ Let $h > 0,$ $x(\cdot) \in \Lip_L^0$ and $\varphi(t) = (D^\alpha x)(t),$ $t \in [0, T].$ Let $t, \tau \in [0, T]$ and, for simplicity, $t > \tau.$ Due to Proposition~\ref{prop_p_alpha}, we have
    \begin{equation*}
        \begin{array}{c}
            \displaystyle h^{-\alpha} \| (\Delta_h^\alpha x)(t) - (\Delta_h^\alpha x)(\tau) \|
            = \Big\|\int_{0}^{t/h} \varphi(t - \xi h) p_\alpha(\xi) d \xi
            - \int_{0}^{\tau/h} \varphi(\tau - \xi h) p_\alpha(\xi) d \xi \Big\| \\[1em]
            \displaystyle \leq \Big\|\int_{0}^{\tau/h} \big( \varphi(t - \xi h) - \varphi(\tau - \xi h) \big) p_\alpha(\xi) d \xi \Big\|
            + \Big\|\int_{\tau/h}^{t/h} \varphi(t - \xi h) p_\alpha(\xi) d\xi\Big\|.
        \end{array}
    \end{equation*}
    By the choice of $K,$ for the first term, we obtain
    \begin{equation*}
        \begin{array}{c}
            \displaystyle \Big\|\int_{0}^{\tau/h} \big( \varphi(t - \xi h) - \varphi(\tau - \xi h) \big) p_\alpha(\xi) d \xi \Big\|
            \leq \int_{0}^{\tau/h} \big\| \varphi(t - \xi h) - \varphi(\tau - \xi h) \big\| |p_\alpha(\xi)| d \xi \\[1em]
            \displaystyle \leq K (t - \tau)^{1 - \alpha} \int_{0}^{\tau/h} |p_\alpha(\xi)| d \xi
            \leq K (t - \tau)^{1 - \alpha} \|p_\alpha(\cdot)\|_1,
        \end{array}
    \end{equation*}
    and for the second term, since $\varphi(0) = 0$ by Proposition~\ref{prop_derivative_properties_2}, we derive
    \begin{equation*}
        \Big\|\int_{\tau/h}^{t/h} \varphi(t - \xi h) p_\alpha(\xi) d\xi\Big\|
        \leq \int_{\tau/h}^{t/h} K |t - \xi h|^{1 - \alpha} |p_\alpha(\xi)| d\xi
        \leq K (t - \tau)^{1 - \alpha} \int_{\tau/h}^{t/h} |p_\alpha(\xi)| d\xi
        \leq K (t - \tau)^{1 - \alpha} \|p_\alpha(\cdot)\|_1.
    \end{equation*}
    Thus, the first inequality in (\ref{prop_GLD_main}) is proved for the chosen $\overline{K}.$ The second inequality in (\ref{prop_GLD_main}) follows from the first one if we take into account that $(\Delta_h^\alpha x)(0) = x(0) = 0$ and the choice of $\overline{M}.$
\end{proof}

Due to relations (\ref{p_alpha_properties}), the following approximation property holds.
\begin{proposition} \label{prop_p_alpha_approximation} \
    Let $W \subset \C$ be a relatively compact set such that $\varphi(0) = 0$ for any $\varphi(\cdot) \in W.$ Then, for any $\varepsilon > 0,$ there exists $h_\ast > 0$ such that, for any $h \in (0, h_\ast]$ and any $\varphi(\cdot) \in W,$ the inequality below is valid:
    \begin{equation} \label{prop_p_alpha_approximation_main}
        \Big\| \int_{0}^{t/h} \varphi(t - \tau h) p_\alpha (\tau) d \tau - \varphi(t) \Big\| \leq \varepsilon, \quad t \in [0, T].
    \end{equation}
\end{proposition}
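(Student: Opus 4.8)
The plan is to treat (\ref{prop_p_alpha_approximation_main}) as an approximate-identity estimate governed by the two relations in (\ref{p_alpha_properties}): the kernel $p_\alpha(\cdot)$ is absolutely integrable, $\|p_\alpha(\cdot)\|_1 = \int_0^\infty |p_\alpha(\tau)|\,d\tau < \infty$, and has unit integral $\int_0^\infty p_\alpha(\tau)\,d\tau = 1$. The uniformity in $\varphi(\cdot)$ will come from the compactness of $W$, while the uniformity in $t$, and in particular the delicate behaviour near $t = 0$, will be supplied by the hypothesis $\varphi(0) = 0$.

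First I would extract from the relative compactness of $W$ in $\C$, via the Arzel\`a--Ascoli theorem, two quantitative consequences valid simultaneously for every $\varphi(\cdot) \in W$: a uniform bound $\|\varphi(\cdot)\|_\infty \le C_0$ and a common modulus of continuity $\omega(\cdot)$, i.e.\ a nondecreasing function with $\omega(0+) = 0$ such that $\|\varphi(s) - \varphi(s')\| \le \omega(|s - s'|)$ for all $s, s' \in [0, T]$. These are precisely the ingredients needed to turn pointwise-in-$\varphi$ bounds into bounds uniform over $W$; note also that $\|\varphi(t - \tau h)\, p_\alpha(\tau)\| \le C_0 |p_\alpha(\tau)|$, so every integral below is absolutely convergent.

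Next, fixing $t \in [0, T]$ and using $\int_0^\infty p_\alpha = 1$ to write $\varphi(t) = \varphi(t) \int_0^\infty p_\alpha(\tau)\,d\tau$, I would split the quantity to be estimated as
\begin{equation*}
    \int_0^{t/h} \varphi(t - \tau h)\, p_\alpha(\tau)\,d\tau - \varphi(t)
    = \int_0^{t/h} \big(\varphi(t - \tau h) - \varphi(t)\big) p_\alpha(\tau)\,d\tau - \varphi(t) \int_{t/h}^\infty p_\alpha(\tau)\,d\tau,
\end{equation*}
and call these terms $A$ and $B$. Since $\tau \in [0, t/h]$ forces $t - \tau h \in [0, t] \subset [0, T]$, the argument of $\varphi(\cdot)$ never leaves the domain. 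For $A$ I would pick a tail threshold $R > 0$ with $\int_R^\infty |p_\alpha(\tau)|\,d\tau$ as small as desired (available from $\|p_\alpha(\cdot)\|_1 < \infty$), bound $\|\varphi(t - \tau h) - \varphi(t)\| \le \omega(\tau h)$, and split the $\tau$-integral at $R$: the part over $[0, R]$ is at most $\omega(Rh)\|p_\alpha(\cdot)\|_1$, while over $[R, t/h]$ the crude bound $\|\varphi(t - \tau h) - \varphi(t)\| \le 2 C_0$ leaves only the small tail $2 C_0 \int_R^\infty |p_\alpha(\tau)|\,d\tau$. For $B$ I would separate the case $t/h \ge R$, where $\|\varphi(t)\| \le C_0$ and the tail $\int_{t/h}^\infty |p_\alpha|$ is small, from the case $t/h < R$, where instead $t < Rh$ gives $\|\varphi(t)\| = \|\varphi(t) - \varphi(0)\| \le \omega(Rh)$ while the integral stays bounded by $\|p_\alpha(\cdot)\|_1$.

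Collecting these contributions produces a bound of the form $2\omega(Rh)\|p_\alpha(\cdot)\|_1 + 3 C_0 \int_R^\infty |p_\alpha(\tau)|\,d\tau$ that is uniform in $t \in [0, T]$ and $\varphi(\cdot) \in W$. Given $\varepsilon > 0$, I would first choose $R$ so that the tail term is at most $\varepsilon / 2$, and then choose $h_\ast > 0$ so small that $\omega(Rh) \le \varepsilon / (4\|p_\alpha(\cdot)\|_1)$ for all $h \in (0, h_\ast]$, which is possible because $R$ and $\|p_\alpha(\cdot)\|_1$ are now fixed and $\omega(0+) = 0$. The step I expect to be the crux is the estimate of $B$ near $t = 0$: there the integration range $[0, t/h]$ is too short for the tail of $p_\alpha(\cdot)$ to be negligible, so the standard mollifier argument breaks down, and it is exactly the assumption $\varphi(0) = 0$, combined with equicontinuity, that keeps $\|\varphi(t)\|$ small and saves the estimate. (If $C_0 = 0$ then $W \subseteq \{0\}$ and the claim is trivial.)
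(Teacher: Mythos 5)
Your proof is correct and follows essentially the same route as the paper's: both are approximate-identity arguments that extract a uniform bound and a common modulus of continuity from the Arzel\`a--Ascoli theorem, split the integral at a tail threshold fixed via $\|p_\alpha(\cdot)\|_1 < \infty$, and invoke $\varphi(0) = 0$ to control the regime where $t/h$ is small. The only difference is bookkeeping: the paper extends $\varphi$ by zero to negative arguments and absorbs your term $B$ into a single integral over $[0, \infty)$, whereas you carry the tail $-\varphi(t)\int_{t/h}^{\infty} p_\alpha(\tau)\, d\tau$ explicitly.
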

\begin{proof} \
    The proof follows the arguments from \cite[\S~2]{RADeVore_GGLorentz_1993}. According to Arcel\`{a}-Ascoli theorem (see, e.g., \cite[Ch.~I, \S~5, Theorem~4]{LVKantorovich_GPAkilov_1982}), one can choose $A > 0$ and $\delta > 0$ such that, for any $\varphi(\cdot) \in W,$ the inequality $\|\varphi(t)\| \leq A,$ $t \in [0, T],$ holds and the estimate $\|\varphi(t) - \varphi(\tau)\| \leq \varepsilon/(2 \|p_\alpha(\cdot)\|_1)$ is valid for any $t, \tau \in [0, T]$ such that $|t - \tau| \leq \delta.$ Basing on the first relation in (\ref{p_alpha_properties}), let us choose $N > 0$ from the condition
    \begin{equation*}
        \int_N^\infty |p_\alpha(\tau)| d \tau \leq \varepsilon / (4 A),
    \end{equation*}
    and put $h_\ast = \delta / N.$ Let $h \in (0, h_\ast]$ and $\varphi(\cdot) \in W.$ Taking into account that $\varphi(0) = 0,$ let us define $\varphi(t) = 0$ for $t < 0.$ Due to the second relation in (\ref{p_alpha_properties}), for any $t \in [0, T],$ we have
    \begin{equation*}
        \begin{array}{c}
            \displaystyle \Big\| \int_{0}^{t/h} \varphi(t - \tau h) p_\alpha (\tau) d \tau - \varphi(t) \Big\|
            = \Big\| \int_{0}^{\infty} (\varphi(t - \tau h) - \varphi(t)) p_\alpha (\tau) d \tau \Big\| \\[1em]
            \displaystyle \leq \int_{0}^{\delta / h} \|\varphi(t - \tau h) - \varphi(t)\| |p_\alpha (\tau)| d \tau
            + \int_{\delta/h}^{\infty} \|\varphi(t - \tau h) - \varphi(t)\| |p_\alpha (\tau)| d \tau.
        \end{array}
    \end{equation*}
    For the first term, by the choice of $\delta,$ we derive
    \begin{equation*}
        \int_{0}^{\delta / h} \|\varphi(t - \tau h) - \varphi(t)\| |p_\alpha (\tau)| d \tau
        \leq \frac{\varepsilon}{2 \|p_\alpha(\cdot)\|_1} \int_{0}^{\delta / h} |p_\alpha (\tau)| d \tau \leq \frac{\varepsilon}{2}.
    \end{equation*}
    For the second term, by the choice of $A,$ $h_\ast$ and $N,$ we deduce
    \begin{equation*}
        \int_{\delta/h}^{\infty} \|\varphi(t - \tau h) - \varphi(t)\| |p_\alpha (\tau)| d \tau
        \leq 2 A \int_{N}^{\infty} |p_\alpha (\tau)| d \tau \leq \frac{\varepsilon}{2}.
    \end{equation*}
    Thus, inequality (\ref{prop_p_alpha_approximation_main}) is proved.
\end{proof}

\begin{lemma} \label{lem_RL_GL_derivatives} \
    For any $L > 0$ and any $\varepsilon > 0,$ there exists $h_\ast >0$ such that, for any $h \in (0, h_\ast]$ and any $x(\cdot) \in \Lip_L^0,$ the following inequality holds:
    \begin{equation} \label{lem_RL_GL_derivatives_main}
        \| h^{-\alpha} (\Delta_h^\alpha x)(t) - (D^\alpha x) (t) \| \leq \varepsilon, \quad t \in [0, T].
    \end{equation}
\end{lemma}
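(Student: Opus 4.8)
The plan is to reduce the lemma to the approximation property of Proposition~\ref{prop_p_alpha_approximation} by exhibiting a single relatively compact set of functions to which that proposition applies. The crucial observation is that, after the substitution furnished by Proposition~\ref{prop_p_alpha}, the left-hand side of (\ref{lem_RL_GL_derivatives_main}) becomes exactly the quantity estimated in (\ref{prop_p_alpha_approximation_main}).

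First I would fix $L > 0$, take an arbitrary $x(\cdot) \in \Lip_L^0$, and set $\varphi(t) = (D^\alpha x)(t)$ for $t \in [0, T]$. By Proposition~\ref{prop_derivative_properties_2} we have $x(\cdot) \in I^\alpha(\Linf)$, $\varphi(\cdot) \in \Linf$ and $\varphi(0) = 0$, so Proposition~\ref{prop_p_alpha} applies and gives, for every $h > 0$,
\begin{equation*}
    h^{-\alpha} (\Delta_h^\alpha x)(t) = \int_0^{t/h} \varphi(t - \tau h)\, p_\alpha(\tau)\, d\tau, \quad t \in [0, T].
\end{equation*}
Consequently the norm on the left of (\ref{lem_RL_GL_derivatives_main}) coincides with $\big\| \int_0^{t/h} \varphi(t - \tau h)\, p_\alpha(\tau)\, d\tau - \varphi(t) \big\|$, which is precisely the expression appearing in (\ref{prop_p_alpha_approximation_main}).

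Next I would assemble the set to which Proposition~\ref{prop_p_alpha_approximation} is applied, namely
\begin{equation*}
    W = \big\{ (D^\alpha x)(\cdot) : x(\cdot) \in \Lip_L^0 \big\} \subset \C,
\end{equation*}
and check its two required properties. The condition $\varphi(0) = 0$ for every $\varphi(\cdot) \in W$ is again Proposition~\ref{prop_derivative_properties_2}. Relative compactness follows from Proposition~\ref{prop_RL_derivative_Lipshitz}: for the chosen $L$ it supplies constants $K$ and $M$ with which every $\varphi(\cdot) \in W$ is uniformly bounded, $\|\varphi(t)\| \leq M$, and uniformly H\"{o}lder continuous, $\|\varphi(t) - \varphi(\tau)\| \leq K |t - \tau|^{1 - \alpha}$, for $t, \tau \in [0, T]$. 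Uniform boundedness together with this common modulus of continuity yields equicontinuity, so the Arzel\`{a}-Ascoli theorem gives that $W$ is relatively compact in $\C$.

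Finally I would invoke Proposition~\ref{prop_p_alpha_approximation} for this $W$ and the given $\varepsilon$ to obtain $h_\ast > 0$; for every $h \in (0, h_\ast]$ and every $x(\cdot) \in \Lip_L^0$ the associated $\varphi(\cdot) \in W$ satisfies (\ref{prop_p_alpha_approximation_main}), and the identification from the first step turns this into (\ref{lem_RL_GL_derivatives_main}). The only nontrivial point is the verification of the hypotheses of Proposition~\ref{prop_p_alpha_approximation} — that is, the relative compactness of $W$ — but this is immediate from the uniform (in $x$) estimates of Proposition~\ref{prop_RL_derivative_Lipshitz}; everything else is bookkeeping. I expect no further obstacle, since both the exact formula and the approximation lemma have already been established in the required form, uniform in $x$ and $h$.
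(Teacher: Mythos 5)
Your proposal is correct and follows essentially the same route as the paper: the same set $W$ of R.-L. derivatives of functions in $\Lip_L^0$, relative compactness via Proposition~\ref{prop_RL_derivative_Lipshitz} and Arzel\`{a}-Ascoli, the vanishing at zero via Proposition~\ref{prop_derivative_properties_2}, and the identification of the two expressions via Proposition~\ref{prop_p_alpha} followed by Proposition~\ref{prop_p_alpha_approximation}. Your write-up merely spells out the equicontinuity argument in slightly more detail than the paper does.
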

\begin{proof} \
    Due to Proposition~\ref{prop_RL_derivative_Lipshitz}, by Arcel\`{a}-Ascoli theorem, the set
    \begin{equation*}
        W = \big\{ \varphi(\cdot) \in \C: \, \varphi(t) = (D^\alpha x)(t), \, t \in [0, T], \, x(\cdot) \in \Lip^0_L\big\}
    \end{equation*}
    is relatively compact. According to Proposition~\ref{prop_derivative_properties_2}, we have $\varphi(0) = 0$ for any $\varphi(\cdot) \in W.$ Hence, from Proposition~\ref{prop_p_alpha_approximation} it follows that, by the number $\varepsilon > 0,$ one can choose $h_\ast > 0$ such that, for any $h \in (0, h_\ast]$ and any $\varphi(\cdot) \in W,$ inequality (\ref{prop_p_alpha_approximation_main}) is valid. Therefore, taking Proposition~\ref{prop_p_alpha} into account, we derive (\ref{lem_RL_GL_derivatives_main}) for any $h \in (0, h_\ast]$ and any $x(\cdot) \in \Lip^0_L.$
\end{proof}

\section{\; Differential Equation of Fractional Order}\label{sec_DE}

Let $R_0 > 0$ be fixed. Let us consider the following Cauchy problem for the ordinary fractional differential equation with the Caputo derivative
\begin{equation} \label{system}
    (^C D^\alpha x) (t) = f(t, x(t)), \quad t \in [0, T], \quad x(t) \in \mathbb{R}^n,
\end{equation}
and the initial condition
\begin{equation} \label{initial_condition}
    x(0) = x_0, \quad x_0 \in B(R_0).
\end{equation}
Here the function $f: [0, T] \times \mathbb{R}^n \rightarrow \mathbb{R}^n$ satisfies the following conditions:
\begin{itemize}
  \item[($f.1$)] \ For any $x \in \mathbb{R}^n,$ the function $f(\cdot, x)$ is measurable on $[0, T].$

  \item[($f.2$)] \ For any $r > 0,$ there exists $\lambda_f > 0$ such that
        \begin{equation*}
            \|f(t, x) - f(t, y)\| \leq \lambda_f \|x - y\|,
            \quad t \in [0, T], \quad x, y \in B(r).
        \end{equation*}

  \item[($f.3$)] \ There exists $c_f > 0$ such that
        \begin{equation*}
            \|f(t, x)\| \leq (1 + \|x\|) c_f,
            \quad t \in [0, T], \quad x \in \mathbb{R}^n.
        \end{equation*}
\end{itemize}

\begin{definition}[see \cite{MIGomoyunov_2017}] \
    A function $x: [0, T] \rightarrow \mathbb{R}^n$ is called a solution of Cauchy problem {\rm(\ref{system}), (\ref{initial_condition})} if $x(\cdot) \in \{x_0\} + I^\alpha (\Linf)$ and equality {\rm(\ref{system})} holds for almost every $t \in [0, T].$
\end{definition}

Here the inclusion $x(\cdot) \in \{x_0\} + I^\alpha (\Linf)$ means that there is a function $\overline{x}(\cdot) \in I^\alpha (\Linf)$ such that $x(t) = x_0 + \overline{x}(t),$ $t \in [0, T].$ Note that $\overline{x}(0) = 0$ due to Proposition~\ref{prop_integral_properties}, and consequently, $x(0) = x_0.$ Therefore, for a function $x(\cdot) \in \{x_0\} + I^\alpha (\Linf),$ initial condition (\ref{initial_condition}) is automatically satisfied.

\begin{theorem}[see \cite{MIGomoyunov_2017}] \label{thm_existence} \
    For any initial value $x_0 \in B(R_0),$ there exists the unique solution $x(\cdot) = x(\cdot; x_0)$ of Cauchy problem {\rm(\ref{system}), (\ref{initial_condition})}. Moreover, there exist $R > 0$ and $H > 0$ such that, for any $x_0 \in B(R_0),$ the solution $x(\cdot) = x(\cdot; x_0)$ satisfies the inequalities below:
    \begin{equation*}
        \|x(t)\| \leq R, \quad
        \|x(t) - x(\tau)\| \leq H |t - \tau|^\alpha, \quad t, \tau \in [0, T].
    \end{equation*}
\end{theorem}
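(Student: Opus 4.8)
The plan is to reduce Cauchy problem (\ref{system}), (\ref{initial_condition}) to an equivalent Volterra integral equation and then to apply a fixed-point argument. Writing a candidate solution as $x(t) = x_0 + (I^\alpha \varphi)(t)$ with $\varphi(\cdot) \in \Linf,$ and recalling that the condition $x(0) = x_0$ makes the Caputo and R.-L. derivatives of $x(\cdot)$ coincide, Proposition~\ref{prop_derivative_properties} gives $(^C D^\alpha x)(t) = \varphi(t)$ for almost every $t \in [0, T].$ Hence $x(\cdot)$ is a solution if and only if $\varphi(t) = f(t, x(t))$ almost everywhere, i.e. if and only if
\begin{equation*}
    x(t) = x_0 + \big(I^\alpha f(\cdot, x(\cdot))\big)(t), \quad t \in [0, T],
\end{equation*}
where the right-hand side is well defined because, for $x(\cdot) \in \C,$ conditions ($f.1$)--($f.3$) make $f(\cdot, x(\cdot))$ a bounded measurable function, so $f(\cdot, x(\cdot)) \in \Linf$ and $I^\alpha f(\cdot, x(\cdot)) \in \C$ by Proposition~\ref{prop_integral_properties}. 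I would therefore study the operator $\mathcal{A}$ on $\C$ given by $(\mathcal{A} x)(\cdot) = x_0 + (I^\alpha f(\cdot, x(\cdot)))(\cdot),$ whose fixed points are exactly the solutions of the Cauchy problem.

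Next I would derive a uniform a priori bound. For any solution, condition ($f.3$) and the definition of $I^\alpha$ yield
\begin{equation*}
    \|x(t)\| \leq R_0 + \frac{c_f T^\alpha}{\Gamma(\alpha + 1)} + \frac{c_f}{\Gamma(\alpha)} \int_0^t (t - \tau)^{\alpha - 1} \|x(\tau)\| \, d\tau, \quad t \in [0, T],
\end{equation*}
and the fractional Gronwall--Bellman inequality then gives $\|x(t)\| \leq R$ for a constant $R > 0$ (expressible through the Mittag-Leffler function) that depends only on $R_0,$ $c_f,$ $T$ and $\alpha.$ It is this bound, independent of $x_0 \in B(R_0),$ that will make all the constants below uniform in the initial value.

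For existence and uniqueness I would first modify $f$ outside $B(R),$ replacing it by $\tilde f(t, x) = f(t, \pi_R(x)),$ where $\pi_R$ is the radial projection onto $B(R).$ Then $\tilde f$ coincides with $f$ on $[0, T] \times B(R),$ is bounded by $(1 + R) c_f,$ and is globally Lipschitz in the state with a single constant $\lambda$ obtained from ($f.2$) with $r = R.$ For the associated operator $\tilde{\mathcal{A}},$ the Lipschitz bound together with the semigroup property $I^\alpha(I^\alpha \psi) = I^{2\alpha}\psi$ gives, by induction, $\|(\tilde{\mathcal{A}}^k x)(t) - (\tilde{\mathcal{A}}^k y)(t)\| \leq \lambda^k (I^{k\alpha}\|x - y\|)(t) \leq \lambda^k T^{k\alpha} \|x - y\|_\infty / \Gamma(k\alpha + 1);$ since $\Gamma(k\alpha + 1)$ grows super-exponentially in $k,$ the iterate $\tilde{\mathcal{A}}^k$ is a contraction on $\C$ for $k$ large, so $\tilde{\mathcal{A}}$ has a unique fixed point $\tilde x(\cdot).$ Because $\|\tilde f(t, \tilde x(t))\| \leq (1 + \|\tilde x(t)\|) c_f,$ the a priori argument applies verbatim to $\tilde x(\cdot)$ and yields $\|\tilde x(t)\| \leq R;$ hence $\pi_R(\tilde x(t)) = \tilde x(t),$ so $\tilde x(\cdot)$ in fact solves the original problem. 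Conversely, every solution of the original problem satisfies $\|x(t)\| \leq R$ by the a priori bound and is therefore a fixed point of $\tilde{\mathcal{A}},$ which gives uniqueness as well.

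Finally, the Hölder estimate is immediate: for the solution $x(\cdot),$ the function $\varphi(\cdot) = f(\cdot, x(\cdot))$ belongs to $\Linf$ with $\|\varphi(\cdot)\|_\infty \leq (1 + R) c_f,$ so Proposition~\ref{prop_integral_properties} applied to $x(t) - x(\tau) = (I^\alpha \varphi)(t) - (I^\alpha \varphi)(\tau)$ gives $\|x(t) - x(\tau)\| \leq H |t - \tau|^\alpha$ with $H = H_\alpha (1 + R) c_f,$ again uniformly in $x_0 \in B(R_0).$ I expect the main obstacle to be the combination of the a priori bound with the global contraction: a single application of $\mathcal{A}$ need not be contractive on all of $[0, T],$ so the iteration estimate via the semigroup property (equivalently, a Bielecki-type weighted norm) is essential, and the truncation of $f$ is needed to turn the merely local Lipschitz condition ($f.2$) into a global one before the fixed-point theorem can be invoked.
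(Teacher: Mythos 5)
The paper does not prove Theorem~\ref{thm_existence} at all: it is imported verbatim from the cited reference \cite{MIGomoyunov_2017}, so there is no in-paper argument to compare against. Your proof is correct and is the standard route for such results: the equivalence with the Volterra equation via Proposition~\ref{prop_derivative_properties} (using that $x - x_0 \in I^\alpha(\Linf)$ vanishes at $0$, so Caputo and R.-L. derivatives agree), the a priori bound from ($f.3$) and the fractional Bellman--Gronwall lemma giving an $R$ depending only on $R_0, c_f, T, \alpha$, the truncation $\tilde f(t,x) = f(t,\pi_R(x))$ to upgrade ($f.2$) to a global Lipschitz condition, the contraction of a high iterate via $\lambda^k T^{k\alpha}/\Gamma(k\alpha+1) \to 0$ (the terms of $E_\alpha(\lambda T^\alpha)$), and the H\"older estimate straight from Proposition~\ref{prop_integral_properties} with $\|\varphi(\cdot)\|_\infty \leq (1+R)c_f$. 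Two small points you pass over in silence but should acknowledge: the measurability of $t \mapsto f(t, x(t))$ for continuous $x(\cdot)$ is a (standard) Carath\'eodory-function argument, not an immediate consequence of ($f.1$)--($f.3$) as stated; and the existence of a unique fixed point for a map whose $k$-th iterate is a contraction is the Weissinger-type extension of Banach's theorem, which deserves a citation. Neither affects the validity of the argument, and all constants you produce are uniform in $x_0 \in B(R_0)$ as the theorem requires.
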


\section{\; Approximating Differential Equation} \label{sec_approximation_DE}

Let $x_0 \in B(R_0),$ and $x(\cdot) = x(\cdot; x_0)$ be the solution of Cauchy problem (\ref{system}), (\ref{initial_condition}). The idea of approximation of $x(\cdot)$ is the following. Let us consider the function $y(t) = (I^{1 - \alpha} (x(\cdot) - x_0))(t),$ $t \in [0, T].$ Then, due to Proposition~\ref{prop_derivative_properties}, the equality
\begin{equation}\label{idea_2}
    x(t) = x_0 + (D^{1 - \alpha} y)(t), \quad t \in [0, T],
\end{equation}
is valid. Therefore, according to (\ref{def_Caputo}) and (\ref{system}), the function $y(\cdot)$ satisfies the differential equation
\begin{equation}\label{idea_1}
    \dot{y}(t) = f\big(t, x_0 + (D^{1 - \alpha} y)(t)\big) \text{ for a.e. } t \in [0, T].
\end{equation}
Moreover, by Proposition~\ref{prop_integral_properties}, the initial condition $y(0) = 0$ holds. Now, let us fix $h > 0$ and, in accordance with Lemma~\ref{lem_RL_GL_derivatives}, approximate the fractional derivative $(D^{1 - \alpha} y)(t)$ in the right-hand sides of relations (\ref{idea_2}) and (\ref{idea_1}) by the divided fractional difference $h^{\alpha - 1} (\Delta_h^{1 - \alpha} y)(t).$ Thus, we derive the following approximating Cauchy problem for the differential equation
\begin{equation} \label{system_y_h}
    \dot{y}(t) = f\big(t, x_0 + h^{\alpha - 1} (\Delta_h^{1 - \alpha} y)(t)\big), \quad t \in [0, T], \quad y(t) \in \mathbb{R}^n,
\end{equation}
under the initial condition
\begin{equation}\label{initial_condition_y_h}
    y(0) = 0,
\end{equation}
and obtain the value $x_0 + h^{\alpha - 1} (\Delta_h^{1 - \alpha} y)(t)$ as an approximation of $x(t).$

Note that equation (\ref{system_y_h}) involves only the usual first order derivative, and its right-hand side depends on the values $y(t - jh),$ $j \in \overline{0, [t/h]}.$ Hence, this equation can be considered as a functional-differential equation of a retarded type with a finite number of delays (see, e.g., \cite{JKHale_SMVLunel_1993}).

\begin{definition} \
    A function $y: [0, T] \rightarrow \mathbb{R}^n$ is called a solution of Cauchy problem {\rm(\ref{system_y_h}), (\ref{initial_condition_y_h})} if $y(\cdot) \in \Lip^0$ and equality {\rm(\ref{system_y_h})} holds for almost every $t \in [0, T].$
\end{definition}

Applying the successive integration method (step-by-step method) (see, e.g., \cite[\S1.2]{AMZverkin_GAKemenskii_SBNorkin_LEElsgolts_1962}), one can show that, due to conditions $(f.1)$--$(f.3),$ the following proposition is valid (see also \cite[Ch.~2]{JKHale_SMVLunel_1993}).
\begin{proposition} \label{prop_existence_y_h} \
    For any $x_0 \in B(R_0)$ and any $h > 0,$ there exists the unique solution $y(\cdot) = y(\cdot; x_0; h)$ of Cauchy problem {\rm(\ref{system_y_h}), (\ref{initial_condition_y_h})}.
\end{proposition}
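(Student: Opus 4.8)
The plan is to prove existence and uniqueness by the step-by-step (successive integration) method, exploiting the fact that equation~(\ref{system_y_h}) is a retarded functional-differential equation whose right-hand side at time $t$ depends only on the values $y(t-jh)$ for $j \in \overline{0, [t/h]}$, i.e.\ on past values at the discrete delay points. First I would partition $[0, T]$ into the intervals $[kh, (k+1)h] \cap [0, T]$ for $k \geq 0$ and proceed inductively on $k$. On the initial interval $[0, h]$, for $t \in [0, h]$ we have $[t/h] = 0$ (except possibly at $t = h$), so the difference reduces to $(\Delta_h^{1-\alpha} y)(t) = y(t)$, and equation~(\ref{system_y_h}) becomes the ordinary differential equation $\dot{y}(t) = f(t, x_0 + y(t))$ with $y(0) = 0$.

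On each subsequent interval $[kh, (k+1)h]$, the induction hypothesis supplies $y(\cdot)$ already constructed on $[0, kh]$. The key observation is that for $t \in [kh, (k+1)h]$ the terms $x(t - jh)$ with $j \geq 1$ are evaluated at arguments lying in $[0, kh]$, hence are known functions of $t$; only the $j = 0$ term, namely $y(t)$ itself, is unknown. Writing $g_k(t) = \sum_{j=1}^{[t/h]} (-1)^j \binom{1-\alpha}{j} y(t - jh)$, which is a known continuous function on $[kh, (k+1)h]$, equation~(\ref{system_y_h}) on this interval takes the form of an ordinary differential equation $\dot{y}(t) = f\big(t, x_0 + h^{\alpha-1}(y(t) + g_k(t))\big)$ with the initial value $y(kh)$ inherited from the previous step.

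Thus on each interval the problem is reduced to a standard Cauchy problem for an ODE of the form $\dot{y}(t) = F_k(t, y(t))$, where $F_k$ inherits measurability in $t$ from $(f.1)$ (together with the measurability, indeed continuity, of $g_k$) and local Lipschitz continuity in $y$ from $(f.2)$ (since $y \mapsto x_0 + h^{\alpha-1}(y + g_k(t))$ is affine). By the Carath\'eodory existence-uniqueness theorem, there is a unique absolutely continuous solution on a maximal subinterval. Condition $(f.3)$ gives the linear growth bound $\|F_k(t, y)\| \leq (1 + \|x_0\| + h^{\alpha-1}\|y + g_k(t)\|) c_f$, which via a Gr\"onwall argument prevents finite-time blow-up and guarantees the solution extends to the full interval $[kh, (k+1)h]$. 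The resulting $y(\cdot)$ is Lipschitz on each interval with a constant controlled by this bound, and the pieces match at the endpoints $kh$ by construction, so the assembled function lies in $\Lip^0$ after finitely many (namely $[T/h]+1$) steps.

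I expect the main obstacle to be bookkeeping rather than a deep difficulty: one must verify that the matching of solution pieces at the grid points $kh$ yields a globally Lipschitz continuous function and confirm that the value of $y$ at $t = kh$ is consistently computed whether one treats $kh$ as the right endpoint of $[(k-1)h, kh]$ or the left endpoint of $[kh, (k+1)h]$ (the jump in $[t/h]$ at $t = kh$ affects only a single delayed term evaluated at $0$, which is harmless). A minor point to handle carefully is the behavior at the grid points where $[t/h]$ increases, ensuring $g_k$ is well defined and continuous up to the endpoints; but since the newly appearing delayed terms enter continuously, the right-hand side remains Carath\'eodory on each closed subinterval, and the construction goes through.
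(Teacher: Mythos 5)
Your proposal is correct and follows exactly the route the paper indicates: the paper gives no detailed proof, merely invoking the successive integration (step-by-step) method under conditions $(f.1)$--$(f.3)$, and your argument fills in precisely those details (reduction on each interval $[kh,(k+1)h]$ to a Carath\'eodory ODE with known delayed terms, local Lipschitz continuity from $(f.2)$, linear growth plus Gronwall from $(f.3)$ to rule out blow-up, and gluing). Two harmless slips: on $[0,h]$ the reduced equation should read $\dot{y}(t) = f(t, x_0 + h^{\alpha-1} y(t))$ (you dropped the factor $h^{\alpha-1}$ there, though you restore it later), and ``$x(t-jh)$'' should be ``$y(t-jh)$''.
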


\begin{lemma} \label{lem_properties_approximating_system} \
    There exists $L > 0$ such that, for any $x_0 \in B(R_0)$ and any $h > 0,$ the solution $y(\cdot) = y(\cdot; x_0; h)$ of Cauchy problem $(\ref{system_y_h}),$ $(\ref{initial_condition_y_h})$ satisfies the inclusion $y(\cdot) \in \Lip_L^0.$
\end{lemma}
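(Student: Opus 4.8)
The plan is to reduce the claim to a uniform bound on the velocity $\dot y(\cdot)$ and then to close the resulting self-referential estimate by a fractional Gr\"onwall argument. Since every solution satisfies $y(\cdot)\in\Lip^0$ by definition, we have $y(0)=0$, and a function in $\Lip^0$ lies in $\Lip_L^0$ precisely when $\|\dot y(t)\|\le L$ for almost every $t\in[0,T]$. Thus it suffices to produce a single constant $L$, depending only on $R_0,c_f,\alpha,T$ and $\|p_{1-\alpha}(\cdot)\|_1$, with $\|\dot y(t)\|\le L$ for a.e.\ $t$, uniformly over $x_0\in B(R_0)$ and $h>0$. First I would apply condition $(f.3)$ to the right-hand side of \eqref{system_y_h} together with $\|x_0\|\le R_0$ to obtain, for a.e.\ $t\in[0,T]$,
\begin{equation*}
\|\dot y(t)\|\le c_f\big(1+R_0+\|h^{\alpha-1}(\Delta_h^{1-\alpha}y)(t)\|\big).
\end{equation*}

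Next I would turn the divided difference into a convolution. Because $y(\cdot)\in\Lip^0$, Proposition~\ref{prop_derivative_properties_2} (applied with $1-\alpha$ in place of $\alpha$, which is legitimate since $1-\alpha\in(0,1)$ and the cited results hold verbatim for every order in $(0,1)$) gives $y(\cdot)\in I^{1-\alpha}(\Linf)$ and $(D^{1-\alpha}y)(t)=(I^\alpha\dot y)(t)$. Proposition~\ref{prop_p_alpha}, again for the order $1-\alpha$, then yields
\begin{equation*}
h^{\alpha-1}(\Delta_h^{1-\alpha}y)(t)=\int_0^{t/h}(I^\alpha\dot y)(t-\tau h)\,p_{1-\alpha}(\tau)\,d\tau,\quad t\in[0,T].
\end{equation*}
Writing $w(t)=\|\dot y(t)\|$, inserting the integral definition of $I^\alpha$, estimating under the integral sign and exchanging the order of integration, I would recast the previous bound in the Volterra form
\begin{equation*}
w(t)\le a+c_f\,(g\ast w)(t),\qquad a=c_f(1+R_0),
\end{equation*}
where $g$ is the convolution of the singular kernel $\tau^{\alpha-1}/\Gamma(\alpha)$ with the rescaled weight $h^{-1}|p_{1-\alpha}(\cdot/h)|$.

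The main obstacle is that $g$ has $L^1[0,T]$-norm at most $\|p_{1-\alpha}(\cdot)\|_1\,T^\alpha/\Gamma(\alpha+1)$, but this quantity need not be smaller than $1/c_f$; hence estimating $g\ast w$ crudely by $\|g\|_1\sup w$ only gives the circular bound $\sup w\le a+c_f\|g\|_1\sup w$, which fails to close when $c_f\|g\|_1\ge1$. The decisive point is to keep the singular structure and iterate the Volterra inequality. By the semigroup property of fractional integration the $k$-fold convolution power of $\tau^{\alpha-1}/\Gamma(\alpha)$ equals $\tau^{k\alpha-1}/\Gamma(k\alpha)$, while the rescaled weight is scale invariant in $L^1$, namely $\|h^{-1}|p_{1-\alpha}(\cdot/h)|\|_1=\|p_{1-\alpha}(\cdot)\|_1$ for every $h>0$ by \eqref{p_alpha_properties} for the order $1-\alpha$; this scale invariance is exactly what makes the estimate $h$-uniform. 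Consequently $\|g^{\ast k}\|_{L^1[0,T]}\le(\|p_{1-\alpha}(\cdot)\|_1\,T^\alpha)^k/\Gamma(k\alpha+1)$, and iterating $w\le a+c_f(g\ast w)$ produces the Mittag--Leffler bound
\begin{equation*}
w(t)\le a\sum_{k=0}^{\infty}\frac{\big(c_f\|p_{1-\alpha}(\cdot)\|_1\,T^\alpha\big)^k}{\Gamma(k\alpha+1)}=:L,\quad t\in[0,T].
\end{equation*}
The series converges for every value of its argument, so $L<\infty$, and $L$ depends only on $R_0,c_f,\alpha,T$ and $\|p_{1-\alpha}(\cdot)\|_1$, neither on $x_0$ nor on $h$. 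This gives $y(\cdot)\in\Lip_L^0$ uniformly, which proves the lemma; the only genuinely delicate step is the generalized Gr\"onwall iteration, for which I would either invoke Henry's lemma or reproduce the short resolvent computation sketched above.
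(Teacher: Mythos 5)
Your proof is correct, and it reaches the same uniform constant structure by a slightly different route than the paper. Both arguments rest on the same two pillars: the convolution representation $h^{\alpha-1}(\Delta_h^{1-\alpha}y)(t)=\int_0^{t/h}(I^\alpha\dot y)(t-\tau h)\,p_{1-\alpha}(\tau)\,d\tau$ from Proposition~\ref{prop_p_alpha} together with Proposition~\ref{prop_derivative_properties_2}, and the $h$-independence of $\|p_{1-\alpha}(\cdot)\|_1$. Where you diverge is in how the self-referential estimate is closed. The paper introduces the auxiliary function $\varphi=(D^{1-\alpha}y)=I^\alpha\dot y$, bounds the divided difference by $\|p_{1-\alpha}(\cdot)\|_1\max_{\xi\in[0,t]}\|\varphi(\xi)\|$, uses the monotonicity of the fractional integral (Proposition~\ref{Prop_integral_properties_2}) to get an inequality for the running maximum $1+\max_{\xi\in[0,t]}\|\varphi(\xi)\|$ against the \emph{standard} kernel $(t-\tau)^{\alpha-1}$, and then invokes the off-the-shelf fractional Bellman--Gronwall lemma; the velocity bound $\|\dot y(t)\|\le a\,E_\alpha(aT^\alpha)$ falls out in one final application of $(f.3)$. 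You instead run the Gronwall iteration directly on $w=\|\dot y\|$ with the composite kernel $g=q_h\ast k_\alpha$, which forces you to establish the convolution-power estimate $\|g^{\ast k}\|_{L^1[0,T]}\le(\|p_{1-\alpha}(\cdot)\|_1T^\alpha)^k/\Gamma(k\alpha+1)$ by hand rather than citing a ready-made lemma. Your computation is sound (the semigroup identity $k_\alpha^{\ast k}=k_{k\alpha}$, Young's inequality on $[0,T]$, and the vanishing of the remainder term using the a priori essential boundedness of $\dot y$ from $y(\cdot)\in\Lip^0$ all check out), and you correctly identify the scale invariance of $\|q_h\|_{L^1}$ as the source of $h$-uniformity. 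The paper's detour through $\varphi$ buys a shorter proof that leans on a quotable lemma; your direct attack is self-contained and makes the resolvent mechanism explicit, at the cost of carrying the non-standard kernel through the iteration. The constants differ ($a\,E_\alpha(aT^\alpha)$ with $a=\max\{1+R_0,\|p_{1-\alpha}(\cdot)\|_1\}c_f$ versus $c_f(1+R_0)E_\alpha(c_f\|p_{1-\alpha}(\cdot)\|_1T^\alpha)$), but both are finite and independent of $x_0$ and $h$, which is all the lemma requires.
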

\begin{proof} \
    Let $c_f$ be the constant from ($f.3$) and $\|p_{1-\alpha}(\cdot)\|_1$ be defined by (\ref{p_alpha_properties}). Let us denote $a = \max \{1 + R_0, \|p_{1 - \alpha}(\cdot)\|_1\} c_f$ and put $L = E_\alpha(a T^\alpha) a,$ where $E_\alpha(\cdot)$ is the Mittag-Leffler function (see, e.g., \cite[(1.90)]{SGSamko_AAKilbas_OIMarichev_1993}). Let us show that this $L$ satisfies the statement of the lemma.

    Let $x_0 \in B(R_0),$ $h > 0$ and $y(\cdot) = y(\cdot; x_0; h)$ be the solution of Cauchy problem (\ref{system_y_h}), (\ref{initial_condition_y_h}). Since $y(\cdot) \in \Lip^0,$ to prove the inclusion $y(\cdot) \in \Lip^0_L,$ it is sufficient to show that $\|\dot{y}(t)\| \leq L$ for almost every $t \in [0, T].$ Let $\varphi(t) = (D^{1 - \alpha} y)(t),$ $t \in [0, T].$ Due to Proposition~\ref{prop_derivative_properties_2} and (\ref{system_y_h}), we have
    \begin{equation*} \label{lem_pas_proof_r}
        \varphi(t) = \frac{1}{\Gamma(\alpha)} \int_{0}^{t} \frac{f \big(t, x_0 + h^{\alpha - 1} (\Delta_h^{1 - \alpha} y) (\tau)\big)}
        {(t - \tau)^{1 - \alpha}} d\tau, \quad t \in [0, T].
    \end{equation*}
    According to Proposition~\ref{prop_p_alpha}, we obtain
    \begin{equation*}
        \| h^{\alpha - 1} (\Delta_h^{1 - \alpha} y) (t) \|
        \leq \|p_{1 - \alpha}(\cdot)\|_1 \max_{\xi \in [0, t]} \|\varphi (\xi)\|, \quad t \in [0, T].
    \end{equation*}
    Hence, by $(f.3),$ we derive
    \begin{equation*} \label{lem_pas_proof_r_estimate}
        \|\varphi (t)\|
        \leq \frac{1}{\Gamma(\alpha)} \int_{0}^{t} \frac{c_f\big(1 + \|x_0\| + \|h^{\alpha - 1} (\Delta_h^{1 - \alpha} y) (\tau)\| \big)}{(t - \tau)^{1 - \alpha}} d \tau
        \leq \frac{a}{\Gamma(\alpha)} \int_{0}^{t} \frac{1 + \max_{\xi \in [0, \tau]} \|\varphi (\xi)\|}{(t - \tau)^{1 - \alpha}} d \tau,
        \quad t \in [0, T].
    \end{equation*}
    Since, due to Proposition~\ref{Prop_integral_properties_2}, the function from the right-hand side of these inequalities is non-decreasing in $t \in [0, T],$ then
    \begin{equation*}
        1 + \max_{\xi \in [0, t]} \|\varphi (\xi)\|
        \leq 1 + \frac{a}{\Gamma(\alpha)} \int_{0}^{t} \frac{1 + \max_{\xi \in [0, \tau]} \|\varphi (\xi)\|}{(t - \tau)^{1 - \alpha}} d \tau,
        \quad t \in [0, T],
    \end{equation*}
    wherefrom, applying the fractional version of Bellman-Gronwall lemma (see, e.g., \cite[Lemma~6.19]{KDiethelm_2010} and also \cite[Lemma~1.1]{MIGomoyunov_2017}), we conclude
    \begin{equation*}
        1 + \max_{\xi \in [0, t]} \|\varphi(\xi)\| \leq E_\alpha (a T^\alpha), \quad t \in [0, T].
    \end{equation*}
    Thus, according to (\ref{system_y_h}) and $(f.3),$ we have
    \begin{equation*}
        \|\dot{y}(t)\| \leq c_f\big(1 + \|x_0\| + \|h^{\alpha - 1} (\Delta_h^{1 - \alpha} y) (t)\| \big)
        \leq a \big(1 + \max_{\xi \in [0, t]} \|\varphi (\xi)\|\big)
        \leq a E_\alpha (a T^\alpha) = L \text{ for a.e. } t \in [0, T].
    \end{equation*}
    The lemma is proved.
\end{proof}

\begin{theorem} \label{thm_approximation} \
    For any $\varepsilon > 0,$ there exists $h_\ast > 0$ such that, for any initial value $x_0 \in B(R_0)$ and any $h \in (0, h_\ast],$ the solutions $x(\cdot) = x(\cdot; x_0)$ of Cauchy problem $(\ref{system}),$ $(\ref{initial_condition})$ and $y(\cdot) = y(\cdot; x_0; h)$ of approximating Cauchy problem $(\ref{system_y_h}),$ $(\ref{initial_condition_y_h})$ satisfy the inequality below:
    \begin{equation} \label{thm_approximation_main}
        \|x(t) - x_0 - h^{\alpha - 1} (\Delta_h^{1 - \alpha} y)(t)\| \leq \varepsilon, \quad t \in [0, T].
    \end{equation}
\end{theorem}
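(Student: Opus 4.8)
The plan is to follow the heuristic of the paragraph preceding the theorem and pass from the exact solution $x(\cdot) = x(\cdot; x_0)$ to its proxy $\bar{y}(t) = (I^{1 - \alpha}(x(\cdot) - x_0))(t)$, for which $x(t) - x_0 = (D^{1 - \alpha} \bar{y})(t)$ and $\dot{\bar{y}}(t) = f(t, x(t))$, and to compare $\bar y$ with the approximating solution $y(\cdot) = y(\cdot; x_0; h)$. Writing $z(t)$ for the left-hand side of $(\ref{thm_approximation_main})$, that is, $z(t) = (D^{1 - \alpha} \bar{y})(t) - h^{\alpha - 1}(\Delta_h^{1 - \alpha} y)(t)$, the goal is a bound $\|z(t)\| \leq \varepsilon$ uniform in $(x_0, h)$. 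First I would fix all relevant constants as uniform: by $(f.3)$ and Theorem~\ref{thm_existence}, $\|\dot{\bar{y}}(t)\| = \|f(t, x(t))\| \leq (1 + R) c_f$, so $\bar{y} \in \Lip_{L_1}^0$ with $L_1 = (1 + R) c_f$ independent of $x_0$; together with Lemma~\ref{lem_properties_approximating_system} this places both $\bar{y}$ and $y$ in $\Lip_{L^\ast}^0$ with $L^\ast = \max\{L_1, L\}$. Moreover $x(t) \in B(R)$, while Proposition~\ref{prop_GL_derivative_Lipshitz} at order $1 - \alpha$ gives $\|h^{\alpha - 1}(\Delta_h^{1 - \alpha} y)(t)\| \leq \overline{M}$ uniformly in $h$; hence both arguments of $f$ lie in $B(r)$ with $r = \max\{R, R_0 + \overline{M}\}$, and I fix the corresponding $\lambda_f$ from $(f.2)$. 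Since $1 - \alpha \in (0, 1)$, Propositions~\ref{prop_derivative_properties_2}, \ref{prop_p_alpha}, \ref{prop_GL_derivative_Lipshitz} and Lemma~\ref{lem_RL_GL_derivatives} apply verbatim at order $1 - \alpha$, exactly as they are already used at that order in Lemma~\ref{lem_properties_approximating_system}.

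Next I would split $z$ into a consistency part and a stability part. Setting $w = \bar{y} - y \in \Lip^0$ and using linearity of $\Delta_h^{1 - \alpha}$,
\[
    z(t) = \underbrace{\big( (D^{1 - \alpha} \bar{y})(t) - h^{\alpha - 1} (\Delta_h^{1 - \alpha} \bar{y})(t) \big)}_{=: \rho(t)} + h^{\alpha - 1} (\Delta_h^{1 - \alpha} w)(t).
\]
The term $\rho$ is a pure G.-L. discretization error: by Lemma~\ref{lem_RL_GL_derivatives} at order $1 - \alpha$, applied to the class $\Lip_{L^\ast}^0$, for any $\varepsilon_1 > 0$ there is $h_\ast > 0$ such that $\|\rho(t)\| \leq \varepsilon_1$ for all $t \in [0, T]$, all $h \in (0, h_\ast]$ and all $x_0$ (as $\bar{y} \in \Lip_{L^\ast}^0$). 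For the second term I would invoke Proposition~\ref{prop_p_alpha} at order $1 - \alpha$, which yields $h^{\alpha - 1}(\Delta_h^{1 - \alpha} w)(t) = \int_0^{t/h} \psi(t - \tau h) p_{1 - \alpha}(\tau)\, d\tau$ with $\psi = (D^{1 - \alpha} w) = (I^\alpha \dot{w})$, so that $\|h^{\alpha - 1}(\Delta_h^{1 - \alpha} w)(t)\| \leq \|p_{1 - \alpha}(\cdot)\|_1 \max_{s \in [0, t]} \|\psi(s)\|$. The feedback enters here: since $\dot{w}(t) = f(t, x(t)) - f\big(t, x_0 + h^{\alpha - 1}(\Delta_h^{1 - \alpha} y)(t)\big)$ and the difference of the two arguments of $f$ is precisely $z(t)$, condition $(f.2)$ gives $\|\dot{w}(t)\| \leq \lambda_f \|z(t)\|$, whence $\|\psi(s)\| \leq \lambda_f (I^\alpha \|z(\cdot)\|)(s)$.

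Then I would close the loop. Writing $g(t) = \max_{\xi \in [0, t]} \|z(\xi)\|$ and using that $I^\alpha$ of a non-negative non-decreasing function is non-decreasing (Proposition~\ref{Prop_integral_properties_2}), I may replace the variable upper limit $s \leq t$ by $t$ and obtain, for every $\xi \leq t$, the bound $\|z(\xi)\| \leq \varepsilon_1 + \frac{\|p_{1 - \alpha}(\cdot)\|_1 \lambda_f}{\Gamma(\alpha)} \int_0^t (t - \tau)^{\alpha - 1} g(\tau)\, d\tau$; taking the maximum over $\xi \leq t$ gives the Volterra inequality
\[
    g(t) \leq \varepsilon_1 + \frac{\|p_{1 - \alpha}(\cdot)\|_1\, \lambda_f}{\Gamma(\alpha)} \int_0^t \frac{g(\tau)}{(t - \tau)^{1 - \alpha}}\, d\tau, \quad t \in [0, T].
\]
Applying the fractional Bellman--Gronwall lemma exactly as in Lemma~\ref{lem_properties_approximating_system} yields $g(t) \leq \varepsilon_1 E_\alpha(\|p_{1 - \alpha}(\cdot)\|_1 \lambda_f\, t^\alpha) \leq \varepsilon_1 C$ with $C = E_\alpha(\|p_{1 - \alpha}(\cdot)\|_1 \lambda_f\, T^\alpha)$ independent of $x_0$ and $h$. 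Finally, given $\varepsilon > 0$ I would set $\varepsilon_1 = \varepsilon / C$ and take the $h_\ast$ furnished by the consistency step; this gives $\|z(t)\| \leq g(T) \leq \varepsilon$ for all $x_0 \in B(R_0)$, all $h \in (0, h_\ast]$ and all $t \in [0, T]$, which is $(\ref{thm_approximation_main})$.

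The main obstacle I anticipate is bookkeeping the uniformity: ensuring that every constant, namely $R$, $L^\ast$, $\overline{M}$, $r$, $\lambda_f$, $\|p_{1 - \alpha}(\cdot)\|_1$ and $C$, is genuinely independent of the initial value $x_0$ and the step $h$, so that a single threshold $h_\ast$ works simultaneously for all of them. The more technical difficulty is the monotonicity step that converts the mixed $\max$/integral estimate into a clean fractional Volterra inequality amenable to the Gronwall lemma; the self-referential appearance of $z$, forced by the Lipschitz feedback of $f$, is precisely what rules out a direct estimate and makes the Gronwall argument necessary.
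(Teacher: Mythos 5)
Your proof is correct, and it takes a genuinely different route from the paper's. The paper works with $s(t) = x(t) - x_0 - (D^{1-\alpha}y)(t)$, observes that $s(\cdot)\in I^\alpha(\Linf)$ with $(D^\alpha s)(t) = f(t,x(t)) - f(t,\widetilde x(t))$, and runs a fractional Lyapunov argument: it invokes the inequality $(D^\alpha\|s\|^2)(t) \le 2\langle s(t),(D^\alpha s)(t)\rangle$ from \cite[Corollary~3.2]{MIGomoyunov_2017} to obtain $(D^\alpha V)(t)\le 2\lambda_f V(t)+\eta$ for $V=\|s\|^2$, closes with the fractional Bellman--Gronwall lemma, and only at the very end adds the Gr\"unwald--Letnikov discretization error $\|(D^{1-\alpha}y)(t)-h^{\alpha-1}(\Delta_h^{1-\alpha}y)(t)\|\le\varepsilon/2$. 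You instead bound the target quantity $z$ directly, splitting it into the consistency error $\rho$ for the proxy $\bar y=I^{1-\alpha}(x(\cdot)-x_0)$ (controlled by Lemma~\ref{lem_RL_GL_derivatives} at order $1-\alpha$) plus $h^{\alpha-1}(\Delta_h^{1-\alpha}w)$ with $w=\bar y - y$, which you convert via Propositions~\ref{prop_p_alpha} and~\ref{prop_derivative_properties_2} into $\|p_{1-\alpha}(\cdot)\|_1\max_s\|(I^\alpha\dot w)(s)\|$ and then, using the Lipschitz feedback $\|\dot w\|\le\lambda_f\|z\|$ and the monotonicity statement of Proposition~\ref{Prop_integral_properties_2}, into a scalar Volterra inequality for $g(t)=\max_{\xi\in[0,t]}\|z(\xi)\|$; the uniformity bookkeeping and the final Gronwall step coincide with the paper's. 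Your route is more self-contained: it avoids the quadratic Lyapunov function and the external fractional differential inequality of \cite{MIGomoyunov_2017}, relying only on tools proved in the paper, essentially upgrading the a priori estimate of Lemma~\ref{lem_properties_approximating_system} into an error bound. The paper's route buys reusability: the inner-product form $\langle s, f(t,x)-f(t,\widetilde x)\rangle$ is precisely what survives in Theorem~\ref{thm_closeness}, where the two systems are driven by different controls, the pointwise bound $\|\dot w\|\le\lambda_f\|z\|$ is unavailable, and the sign information extracted through the Hamiltonian by the extremal shift rule is essential --- so your linear-feedback argument would not carry over to that setting, while the Lyapunov machinery does.
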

\begin{proof} \
    According to Propositions~\ref{prop_RL_derivative_Lipshitz} and~\ref{prop_GL_derivative_Lipshitz}, by the number $L > 0$ from Lemma~\ref{lem_properties_approximating_system}, let us choose $M > 0$ and $\overline{M} > 0$ such that, for any $h > 0$ and any $y(\cdot) \in \Lip_L^0,$ the following inequalities are valid:
    \begin{equation} \label{thm_a_p_M}
        \|(D^{1 - \alpha} y) (t)\| \leq M, \quad
        \|h^{\alpha - 1} (\Delta_h^{1 - \alpha} y)(t)\| \leq \overline{M}, \quad t \in [0, T].
    \end{equation}
    Let $R > 0$ be taken from Theorem~\ref{thm_existence} and $R_1 = R + R_0 + M + \overline{M}.$ By the number $R_1,$ let us choose $\lambda_f > 0$ according to ($f.2$). Let $\varepsilon > 0$ be fixed. Let $\eta > 0$ and $\zeta > 0$ satisfy the inequalities
    \begin{equation} \label{thm_a_p_eta_zeta}
        \eta \leq \Gamma(\alpha + 1) \varepsilon^2 / (4 T^\alpha E_\alpha (2 \lambda_f T^\alpha) ), \quad
        \zeta \leq \eta/(2 \lambda_f R_1).
    \end{equation}
    Let $h_\ast > 0$ be chosen by Lemma~\ref{lem_RL_GL_derivatives} such that, for any $h \in (0, h_\ast]$ and any $y(\cdot) \in \Lip^0_L,$ the following inequality holds:
    \begin{equation} \label{thm_a_p_h_ast}
        \|(D^{1 - \alpha} y)(t) - h^{\alpha - 1} (\Delta_h^{1 - \alpha} y) (t)\| \leq \min\big\{ \zeta,  \varepsilon/2 \big\},
        \quad t \in [0, T].
    \end{equation}
    Let us show that this $h_\ast$ satisfies the statement of the theorem.

    Let $x_0 \in B(R_0)$ and $h \in (0, h_\ast].$ Let $x(\cdot) = x(\cdot; x_0)$ and $y(\cdot) = y(\cdot; x_0; h)$ be, respectively, the solutions of Cauchy problems (\ref{system}), (\ref{initial_condition}) and (\ref{system_y_h}), (\ref{initial_condition_y_h}).
    Note that, by the choice of $L,$ the inclusion $y(\cdot) \in \Lip_L^0$ holds, and therefore, relations (\ref{thm_a_p_M}) and (\ref{thm_a_p_h_ast}) are valid. Let us consider the function
    \begin{equation} \label{thm_a_p_s}
        s(t) = x(t) - x_0 - (D^{1 - \alpha} y)(t), \quad t \in [0, T].
    \end{equation}
    According to Proposition~\ref{prop_derivative_properties_2}, we have $(D^{1 - \alpha} y)(\cdot) \in I^\alpha(\Linf)$ and $\dot{y}(t) = (D^\alpha (D^{1 - \alpha} y))(t)$ for almost every $t \in [0, T].$ Hence, taking into account that $x(\cdot) \in \{x_0\} + I^\alpha(\Linf),$ we obtain $s(\cdot) \in I^\alpha(\Linf).$ Furthermore, according to (\ref{system}), (\ref{system_y_h}), we have
    \begin{equation*}
        (D^\alpha s) (t) = f(t, x(t)) - f(t, \widetilde{x}(t)) \text{ for a.e. } t \in [0, T],
    \end{equation*}
    where, for brevity, we denote
    \begin{equation} \label{thm_a_p_widetilde_x}
        \widetilde{x}(t) = x_0 + h^{\alpha - 1} (\Delta_h^{1 - \alpha} y)(t), \quad t \in [0, T].
    \end{equation}
    Let us consider the Lyapunov function
    \begin{equation} \label{thm_a_p_V}
        V(t) = \|s(t)\|^2, \quad t \in [0, T].
    \end{equation}
    Applying \cite[Corollary~3.2]{MIGomoyunov_2017}, we derive $V(\cdot) \in I^\alpha(\Linf([0, T], \mathbb{R}))$ and
    \begin{equation} \label{thm_a_p_DV_estimate}
        (D^\alpha V)(t) \leq 2 \langle s(t), (D^\alpha s)(t) \rangle
        = 2 \langle s(t), f(t, x(t)) - f(t, \widetilde{x}(t) \rangle \text{ for a.e. } t \in [0, T].
    \end{equation}
    Let us estimate the right-hand side of these relations for $t \in [0, T].$ Note that, by the choice of $R_1,$ we have
    \begin{equation*}
        \max \big\{ \|x(t)\|, \|\widetilde{x}(t)\|, \|s(t)\| \big\} \leq R_1.
    \end{equation*}
    Therefore, by the choice of $\lambda_f,$ $h_\ast$ and $\zeta,$ we obtain
    \begin{equation} \label{thm_a_p_product_estimate}
        \begin{array}{c}
            \langle s(t), f(t, x(t)) - f(t, \widetilde{x}(t) \rangle
            \leq \lambda_f \|s(t)\| \|x(t) - \widetilde{x}(t)\| %\\[0.5em]
            \leq \lambda_f \|s(t)\| \big( \|s(t)\| + \|(D^{1 - \alpha}y)(t) - (\Delta_h^{1 - \alpha}y)(t)\| \big) \\[0.5em]
            \leq \lambda_f V(t) + \lambda_f R_1 \zeta
            \leq \lambda_f V(t) + \eta/2.
        \end{array}
    \end{equation}
    From (\ref{thm_a_p_DV_estimate}) and (\ref{thm_a_p_product_estimate}) it follows that
    \begin{equation} \label{thm_a_p_DV_estimate_final}
        (D^\alpha V)(t) \leq 2 \lambda_f V(t) + \eta \text{ for a.e. } t \in [0, T].
    \end{equation}
    Consequently, basing on Proposition~\ref{prop_integral_properties}, for $t \in [0, T],$ we obtain
    \begin{equation*}
        V(t) \leq \frac{1}{\Gamma(\alpha)} \int_{0}^{t} \frac{2 \lambda_f V(\tau) + \eta}{(t - \tau)^{1 - \alpha}} d \tau \\[0.5em]
        \leq \frac{\eta T^\alpha}{\Gamma(\alpha + 1)}
        + \frac{2 \lambda_f}{\Gamma(\alpha)} \int_{0}^{t} \frac{V(\tau)}{(t - \tau)^{1 - \alpha}} d \tau,
    \end{equation*}
    wherefrom, applying the fractional version of Bellman-Gronwall lemma, due to the choice of $\eta,$ we deduce
    \begin{equation*}
        V(t) \leq \eta T^\alpha E_\alpha(2 \lambda_f T^\alpha) / \Gamma(\alpha + 1) \leq \varepsilon^2/4, \quad t \in [0, T].
    \end{equation*}
    Thus, for $t \in [0, T],$ we have
    \begin{equation*}
        \|x(t) - x_0 - (D^{1 - \alpha} y) (t)\| \leq \varepsilon/2,
    \end{equation*}
    and therefore, by the choice of $h_\ast,$ we derive
    \begin{equation*}
        \|x(t) - x_0 - h^{\alpha - 1}(\Delta_h^{1 - \alpha} y)(t)\|
        \leq \varepsilon/2 + \|(D^{1 - \alpha} y)(t) - h^{\alpha - 1} (\Delta_h^{1 - \alpha} y)(t)\|
        \leq \varepsilon.
    \end{equation*}
    The theorem is proved.
\end{proof}

\section{\; Example~1}\label{sec_Example_1}

Let us illustrate Theorem~\ref{thm_approximation} by an example. Let us consider the following Cauchy problem for the system of fractional order differential equations
\begin{equation} \label{system_ex}
    \begin{array}{c}
        \begin{cases}
            (^C D^{0.3} x_1) (t) = x_1(t) - x_2(t) + \cos(2 \, t), \\[0.5em]
            (^C D^{0.3} x_2) (t) = t \, x_1(t) + e^{\cos(x_2(t))} + \sin(2 \, t),
        \end{cases} \\[1.7em]
        t \in [0, 5], \quad x(t) = (x_1(t), x_2(t)) \in \mathbb{R}^2,
    \end{array}
\end{equation}
with the initial condition
\begin{equation}\label{initial_condition_ex}
    x(0) = x_0 = (0.5, -1),
\end{equation}
and the corresponding approximating Cauchy problem (\ref{system_y_h}), (\ref{initial_condition_y_h}). For the numerical solution of these problems, we use the forward Euler methods (see, e.g., \cite[p.~101]{CLi_FZeng_2015} and \cite[p.~115]{AVKim_2015}) with the constant step $0.001.$

The following three cases were considered. In the first one, we choose $h = 0.1.$ The obtained difference between the solution $x(\cdot)$ of (\ref{system_ex}), (\ref{initial_condition_ex}) and its approximation $\widetilde{x}(t) = x_0 + h^{-0.7} (\Delta_h^{0.7} y)(t),$ $t \in [0, 5],$ is
\begin{equation*}
    \max_{t \in [0, 5]} \|x(t) - \widetilde{x}(t)\| \approx 0.9585.
\end{equation*}
In the second case, we choose $h = 0.01$ and obtain
\begin{equation*}
    \max_{t \in [0, 5]} \|x(t) - \widetilde{x}(t)\| \approx 0.4232.
\end{equation*}
In the third case, for $h = 0.001,$ we have
\begin{equation*}
    \max_{t \in [0, 5]} \|x(t) - \widetilde{x}(t)\| \approx 0.0436.
\end{equation*}
The simulations results are shown in Fig.~4.1.

\vspace*{-2.5em}

\begin{center}
    \hspace*{-2em}
    % GNUPLOT: LaTeX picture with Postscript
\begingroup
  \makeatletter
  \providecommand\color[2][]{%
    \GenericError{(gnuplot) \space\space\space\@spaces}{%
      Package color not loaded in conjunction with
      terminal option `colourtext'%
    }{See the gnuplot documentation for explanation.%
    }{Either use 'blacktext' in gnuplot or load the package
      color.sty in LaTeX.}%
    \renewcommand\color[2][]{}%
  }%
  \providecommand\includegraphics[2][]{%
    \GenericError{(gnuplot) \space\space\space\@spaces}{%
      Package graphicx or graphics not loaded%
    }{See the gnuplot documentation for explanation.%
    }{The gnuplot epslatex terminal needs graphicx.sty or graphics.sty.}%
    \renewcommand\includegraphics[2][]{}%
  }%
  \providecommand\rotatebox[2]{#2}%
  \@ifundefined{ifGPcolor}{%
    \newif\ifGPcolor
    \GPcolorfalse
  }{}%
  \@ifundefined{ifGPblacktext}{%
    \newif\ifGPblacktext
    \GPblacktexttrue
  }{}%
  % define a \g@addto@macro without @ in the name:
  \let\gplgaddtomacro\g@addto@macro
  % define empty templates for all commands taking text:
  \gdef\gplbacktext{}%
  \gdef\gplfronttext{}%
  \makeatother
  \ifGPblacktext
    % no textcolor at all
    \def\colorrgb#1{}%
    \def\colorgray#1{}%
  \else
    % gray or color?
    \ifGPcolor
      \def\colorrgb#1{\color[rgb]{#1}}%
      \def\colorgray#1{\color[gray]{#1}}%
      \expandafter\def\csname LTw\endcsname{\color{white}}%
      \expandafter\def\csname LTb\endcsname{\color{black}}%
      \expandafter\def\csname LTa\endcsname{\color{black}}%
      \expandafter\def\csname LT0\endcsname{\color[rgb]{1,0,0}}%
      \expandafter\def\csname LT1\endcsname{\color[rgb]{0,1,0}}%
      \expandafter\def\csname LT2\endcsname{\color[rgb]{0,0,1}}%
      \expandafter\def\csname LT3\endcsname{\color[rgb]{1,0,1}}%
      \expandafter\def\csname LT4\endcsname{\color[rgb]{0,1,1}}%
      \expandafter\def\csname LT5\endcsname{\color[rgb]{1,1,0}}%
      \expandafter\def\csname LT6\endcsname{\color[rgb]{0,0,0}}%
      \expandafter\def\csname LT7\endcsname{\color[rgb]{1,0.3,0}}%
      \expandafter\def\csname LT8\endcsname{\color[rgb]{0.5,0.5,0.5}}%
    \else
      % gray
      \def\colorrgb#1{\color{black}}%
      \def\colorgray#1{\color[gray]{#1}}%
      \expandafter\def\csname LTw\endcsname{\color{white}}%
      \expandafter\def\csname LTb\endcsname{\color{black}}%
      \expandafter\def\csname LTa\endcsname{\color{black}}%
      \expandafter\def\csname LT0\endcsname{\color{black}}%
      \expandafter\def\csname LT1\endcsname{\color{black}}%
      \expandafter\def\csname LT2\endcsname{\color{black}}%
      \expandafter\def\csname LT3\endcsname{\color{black}}%
      \expandafter\def\csname LT4\endcsname{\color{black}}%
      \expandafter\def\csname LT5\endcsname{\color{black}}%
      \expandafter\def\csname LT6\endcsname{\color{black}}%
      \expandafter\def\csname LT7\endcsname{\color{black}}%
      \expandafter\def\csname LT8\endcsname{\color{black}}%
    \fi
  \fi
    \setlength{\unitlength}{0.0500bp}%
    \ifx\gptboxheight\undefined%
      \newlength{\gptboxheight}%
      \newlength{\gptboxwidth}%
      \newsavebox{\gptboxtext}%
    \fi%
    \setlength{\fboxrule}{0.5pt}%
    \setlength{\fboxsep}{1pt}%
\begin{picture}(4752.00,5760.00)%
    \gplgaddtomacro\gplbacktext{%
      \csname LTb\endcsname%
      \put(462,1087){\makebox(0,0)[r]{\strut{}$-3$}}%
      \csname LTb\endcsname%
      \put(462,1714){\makebox(0,0)[r]{\strut{}$-2$}}%
      \csname LTb\endcsname%
      \put(462,2341){\makebox(0,0)[r]{\strut{}$-1$}}%
      \csname LTb\endcsname%
      \put(462,2968){\makebox(0,0)[r]{\strut{}$0$}}%
      \csname LTb\endcsname%
      \put(462,3594){\makebox(0,0)[r]{\strut{}$1$}}%
      \csname LTb\endcsname%
      \put(462,4221){\makebox(0,0)[r]{\strut{}$2$}}%
      \csname LTb\endcsname%
      \put(462,4848){\makebox(0,0)[r]{\strut{}$x$}}%
      \csname LTb\endcsname%
      \put(594,867){\makebox(0,0){\strut{}$0$}}%
      \csname LTb\endcsname%
      \put(1346,867){\makebox(0,0){\strut{}$1$}}%
      \csname LTb\endcsname%
      \put(2098,867){\makebox(0,0){\strut{}$2$}}%
      \csname LTb\endcsname%
      \put(2851,867){\makebox(0,0){\strut{}$3$}}%
      \csname LTb\endcsname%
      \put(3603,867){\makebox(0,0){\strut{}$4$}}%
      \csname LTb\endcsname%
      \put(4355,867){\makebox(0,0){\strut{}$t$}}%
    }%
    \gplgaddtomacro\gplfronttext{%
      \csname LTb\endcsname%
      \put(3032,4403){\makebox(0,0)[r]{\strut{}$x_1$}}%
      \csname LTb\endcsname%
      \put(3032,4139){\makebox(0,0)[r]{\strut{}$x_2$}}%
      \csname LTb\endcsname%
      \put(3979,4403){\makebox(0,0)[r]{\strut{}$\widetilde{x}_1$}}%
      \csname LTb\endcsname%
      \put(3979,4139){\makebox(0,0)[r]{\strut{}$\widetilde{x}_2$}}%
    }%
    \gplbacktext
    \put(0,0){\includegraphics{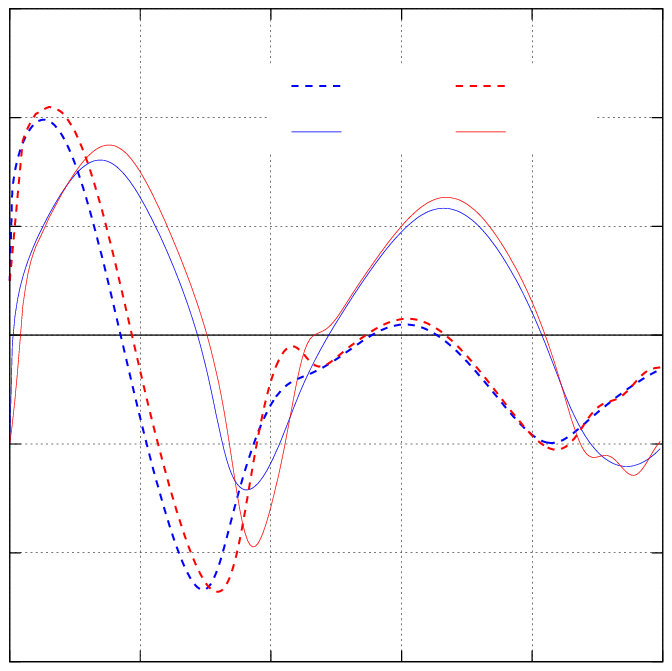}}%
    \gplfronttext
  \end{picture}%
\endgroup

    \hspace*{-1em}
    % GNUPLOT: LaTeX picture with Postscript
\begingroup
  \makeatletter
  \providecommand\color[2][]{%
    \GenericError{(gnuplot) \space\space\space\@spaces}{%
      Package color not loaded in conjunction with
      terminal option `colourtext'%
    }{See the gnuplot documentation for explanation.%
    }{Either use 'blacktext' in gnuplot or load the package
      color.sty in LaTeX.}%
    \renewcommand\color[2][]{}%
  }%
  \providecommand\includegraphics[2][]{%
    \GenericError{(gnuplot) \space\space\space\@spaces}{%
      Package graphicx or graphics not loaded%
    }{See the gnuplot documentation for explanation.%
    }{The gnuplot epslatex terminal needs graphicx.sty or graphics.sty.}%
    \renewcommand\includegraphics[2][]{}%
  }%
  \providecommand\rotatebox[2]{#2}%
  \@ifundefined{ifGPcolor}{%
    \newif\ifGPcolor
    \GPcolorfalse
  }{}%
  \@ifundefined{ifGPblacktext}{%
    \newif\ifGPblacktext
    \GPblacktexttrue
  }{}%
  % define a \g@addto@macro without @ in the name:
  \let\gplgaddtomacro\g@addto@macro
  % define empty templates for all commands taking text:
  \gdef\gplbacktext{}%
  \gdef\gplfronttext{}%
  \makeatother
  \ifGPblacktext
    % no textcolor at all
    \def\colorrgb#1{}%
    \def\colorgray#1{}%
  \else
    % gray or color?
    \ifGPcolor
      \def\colorrgb#1{\color[rgb]{#1}}%
      \def\colorgray#1{\color[gray]{#1}}%
      \expandafter\def\csname LTw\endcsname{\color{white}}%
      \expandafter\def\csname LTb\endcsname{\color{black}}%
      \expandafter\def\csname LTa\endcsname{\color{black}}%
      \expandafter\def\csname LT0\endcsname{\color[rgb]{1,0,0}}%
      \expandafter\def\csname LT1\endcsname{\color[rgb]{0,1,0}}%
      \expandafter\def\csname LT2\endcsname{\color[rgb]{0,0,1}}%
      \expandafter\def\csname LT3\endcsname{\color[rgb]{1,0,1}}%
      \expandafter\def\csname LT4\endcsname{\color[rgb]{0,1,1}}%
      \expandafter\def\csname LT5\endcsname{\color[rgb]{1,1,0}}%
      \expandafter\def\csname LT6\endcsname{\color[rgb]{0,0,0}}%
      \expandafter\def\csname LT7\endcsname{\color[rgb]{1,0.3,0}}%
      \expandafter\def\csname LT8\endcsname{\color[rgb]{0.5,0.5,0.5}}%
    \else
      % gray
      \def\colorrgb#1{\color{black}}%
      \def\colorgray#1{\color[gray]{#1}}%
      \expandafter\def\csname LTw\endcsname{\color{white}}%
      \expandafter\def\csname LTb\endcsname{\color{black}}%
      \expandafter\def\csname LTa\endcsname{\color{black}}%
      \expandafter\def\csname LT0\endcsname{\color{black}}%
      \expandafter\def\csname LT1\endcsname{\color{black}}%
      \expandafter\def\csname LT2\endcsname{\color{black}}%
      \expandafter\def\csname LT3\endcsname{\color{black}}%
      \expandafter\def\csname LT4\endcsname{\color{black}}%
      \expandafter\def\csname LT5\endcsname{\color{black}}%
      \expandafter\def\csname LT6\endcsname{\color{black}}%
      \expandafter\def\csname LT7\endcsname{\color{black}}%
      \expandafter\def\csname LT8\endcsname{\color{black}}%
    \fi
  \fi
    \setlength{\unitlength}{0.0500bp}%
    \ifx\gptboxheight\undefined%
      \newlength{\gptboxheight}%
      \newlength{\gptboxwidth}%
      \newsavebox{\gptboxtext}%
    \fi%
    \setlength{\fboxrule}{0.5pt}%
    \setlength{\fboxsep}{1pt}%
\begin{picture}(4752.00,5760.00)%
    \gplgaddtomacro\gplbacktext{%
      \csname LTb\endcsname%
      \put(462,1087){\makebox(0,0)[r]{\strut{}$-3$}}%
      \csname LTb\endcsname%
      \put(462,1714){\makebox(0,0)[r]{\strut{}$-2$}}%
      \csname LTb\endcsname%
      \put(462,2341){\makebox(0,0)[r]{\strut{}$-1$}}%
      \csname LTb\endcsname%
      \put(462,2968){\makebox(0,0)[r]{\strut{}$0$}}%
      \csname LTb\endcsname%
      \put(462,3594){\makebox(0,0)[r]{\strut{}$1$}}%
      \csname LTb\endcsname%
      \put(462,4221){\makebox(0,0)[r]{\strut{}$2$}}%
      \csname LTb\endcsname%
      \put(462,4848){\makebox(0,0)[r]{\strut{}$x$}}%
      \csname LTb\endcsname%
      \put(594,867){\makebox(0,0){\strut{}$0$}}%
      \csname LTb\endcsname%
      \put(1346,867){\makebox(0,0){\strut{}$1$}}%
      \csname LTb\endcsname%
      \put(2098,867){\makebox(0,0){\strut{}$2$}}%
      \csname LTb\endcsname%
      \put(2851,867){\makebox(0,0){\strut{}$3$}}%
      \csname LTb\endcsname%
      \put(3603,867){\makebox(0,0){\strut{}$4$}}%
      \csname LTb\endcsname%
      \put(4355,867){\makebox(0,0){\strut{}$t$}}%
    }%
    \gplgaddtomacro\gplfronttext{%
      \csname LTb\endcsname%
      \put(3032,4403){\makebox(0,0)[r]{\strut{}$x_1$}}%
      \csname LTb\endcsname%
      \put(3032,4139){\makebox(0,0)[r]{\strut{}$x_2$}}%
      \csname LTb\endcsname%
      \put(3979,4403){\makebox(0,0)[r]{\strut{}$\widetilde{x}_1$}}%
      \csname LTb\endcsname%
      \put(3979,4139){\makebox(0,0)[r]{\strut{}$\widetilde{x}_2$}}%
    }%
    \gplbacktext
    \put(0,0){\includegraphics{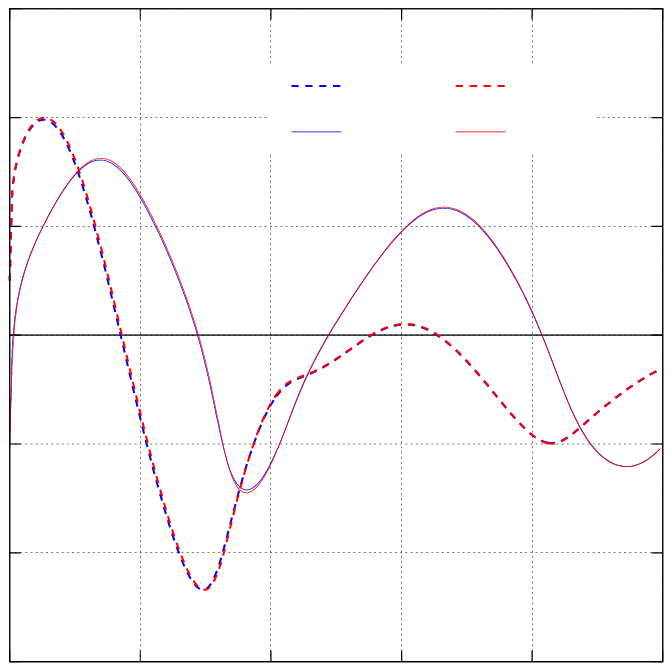}}%
    \gplfronttext
  \end{picture}%
\endgroup

    \hspace*{-2em}

    \vspace*{-1.8em}

    Fig. 4.1: The obtained solution $x(\cdot)$ of Cauchy problem (\ref{system_ex}), (\ref{initial_condition_ex}) and its approximations $\widetilde{x}(\cdot)$ for $h = 0.1$ and $h = 0.01.$
\end{center}

\section{\; Conflict-Controlled Dynamical System of Fractional Order and its Approximation}\label{sec_DS}

Let us consider a conflict-controlled dynamical system which motion is described by the fractional differential equation
\begin{equation} \label{system_u_v}
    (^C D^\alpha x) (t) = g(t, x(t), u(t), v(t)), \quad t \in [0, T], \quad
    x(t) \in \mathbb{R}^n, \quad u(t) \in P \subset \mathbb{R}^{n_u}, \quad v(t) \in Q \subset \mathbb{R}^{n_v},
\end{equation}
with the initial condition
\begin{equation}\label{initial_condition_u_v}
    x(0) = x_0, \quad x_0 \in B(R_0).
\end{equation}
Here $t$ is the time variable, $x$ is the state vector, $u$ is the control vector and $v$ is the vector of unknown disturbances; $n_u, n_v \in \mathbb{N};$ $P$ and $Q$ are compact sets; $x_0$ is the initial value of the state vector. The function $g: [0, T] \times \mathbb{R}^n \times P \times Q \rightarrow \mathbb{R}^n$ satisfies the following conditions:
\begin{itemize}
  \item[($g.1$)] \ The function $g(\cdot)$ is continuous.
  \item[($g.2$)] \ For any $r \geq 0,$ there exists $\lambda_g > 0$ such that
      \begin{equation*}
        \|g(t, x, u, v) - g(t, y, u, v)\| \leq \lambda_g \|x - y\|, \quad
        t \in [0, T], \quad x, y \in B(r), \quad u \in P, \quad v \in Q.
      \end{equation*}
  \item[($g.3$)] \ There exits $c_g > 0$ such that
    \begin{equation*}
        \|g(t, x, u, v)\| \leq (1 + \|x\|) c_g,
        \ \ t \in [0, T], \ \ x \in \mathbb{R}^n, \ \ u \in P, \ \ v \in Q.
    \end{equation*}
  \item[($g.4$)] \ For any $t \in [0, T]$ and $x, s \in \mathbb{R}^n,$ the following equality holds:
    \begin{equation*}
            \displaystyle
            \min_{u \in P} \max_{v \in Q} \langle s, g(t, x, u, v) \rangle
            = \max_{v \in Q} \min_{u \in P} \langle s, g(t, x, u, v) \rangle.
    \end{equation*}
\end{itemize}
It should be noted here that these conditions are quite standard for control problems under disturbances and differential games (see, e.g., \cite[pp.~7, 8]{NNKrasovskii_AISubbotin_1988}).

By admissible control and disturbance realizations, we mean measurable functions $u: [0, T) \rightarrow P$ and $v: [0, T) \rightarrow Q,$ respectively. The sets of all admissible control $u(\cdot)$ and disturbance $v(\cdot)$ realizations are denoted by $\mathcal{U}$ and $\mathcal{V}.$ A motion of system (\ref{system_u_v}), (\ref{initial_condition_u_v}) that corresponds to an initial value $x_0 \in B(R_0)$ and realizations $u(\cdot) \in \mathcal{U},$ $v(\cdot) \in \mathcal{V}$ is a function $x(\cdot) \in \{x_0\} + I^\alpha (\Linf)$ which, together with $u(\cdot)$ and $v(\cdot),$ satisfies (\ref{system_u_v}) for almost every $t \in [0, T].$

\begin{proposition}[see \cite{MIGomoyunov_2017}] \label{Prop_motion_u_v_properties} \
    For any initial value $x_0 \in B(R_0)$ and any realizations $u(\cdot) \in \mathcal{U},$ $v(\cdot) \in \mathcal{V},$ there exists the unique motion $x(\cdot) = x(\cdot; x_0; u(\cdot), v(\cdot))$ of system $(\ref{system_u_v}),$ $(\ref{initial_condition_u_v}).$ Moreover, there exist $\overline{R} > 0$ and $\overline{H} > 0$ such that, for any $x_0 \in B(R_0),$ $u(\cdot) \in \mathcal{U},$ $v(\cdot) \in \mathcal{V},$ the motion $x(\cdot) = x(\cdot; x_0; u(\cdot), v(\cdot))$ satisfies the inequalities below:
    \begin{equation*}
        \|x(t)\| \leq \overline{R}, \quad \|x(t) - x(\tau)\| \leq \overline{H} |t - \tau|^\alpha, \quad t, \tau \in [0, T].
    \end{equation*}
\end{proposition}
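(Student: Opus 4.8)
The plan is to reduce this proposition directly to Theorem~\ref{thm_existence} by freezing the control and disturbance realizations. Fix any $x_0 \in B(R_0)$, $u(\cdot) \in \mathcal{U}$ and $v(\cdot) \in \mathcal{V}$, and introduce the function $f_{u,v}(t, x) = g(t, x, u(t), v(t))$, $t \in [0, T]$, $x \in \mathbb{R}^n$. By the very definition of a motion, a function $x(\cdot) \in \{x_0\} + I^\alpha(\Linf)$ is a motion of system $(\ref{system_u_v})$, $(\ref{initial_condition_u_v})$ corresponding to these data if and only if it is a solution of Cauchy problem $(\ref{system})$, $(\ref{initial_condition})$ with $f = f_{u,v}$. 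Hence existence and uniqueness of the motion will follow the instant $f_{u,v}$ is shown to satisfy $(f.1)$--$(f.3)$.

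The key step is therefore to verify these three conditions, \emph{and} to do so with constants that do not depend on the chosen realizations. For $(f.1)$, for each fixed $x$ the map $t \mapsto g(t, x, u(t), v(t))$ is measurable, being the composition of the continuous function $g$ (condition $(g.1)$) with the measurable map $t \mapsto (t, x, u(t), v(t))$. For $(f.2)$, given $r > 0$, condition $(g.2)$ supplies a constant $\lambda_g$ depending only on $r$, such that $\|f_{u,v}(t, x) - f_{u,v}(t, y)\| = \|g(t, x, u(t), v(t)) - g(t, y, u(t), v(t))\| \leq \lambda_g \|x - y\|$ for all $x, y \in B(r)$; crucially, $\lambda_g$ is uniform over $u \in P$, $v \in Q$, hence over the realizations. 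For $(f.3)$, condition $(g.3)$ gives $\|f_{u,v}(t, x)\| \leq (1 + \|x\|) c_g$ with $c_g$ again independent of $u, v$.

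Applying Theorem~\ref{thm_existence} to $f = f_{u,v}$ then yields, for each pair $(u(\cdot), v(\cdot))$, the unique solution $x(\cdot) = x(\cdot; x_0; u(\cdot), v(\cdot))$ together with constants $R, H > 0$ furnishing the bounds $\|x(t)\| \leq R$ and $\|x(t) - x(\tau)\| \leq H |t - \tau|^\alpha$. The decisive observation is that the constants $R$ and $H$ produced by Theorem~\ref{thm_existence} are extracted solely from the data $\lambda_g$, $c_g$, $R_0$, $T$ and $\alpha$ entering conditions $(f.2)$, $(f.3)$, and never from the particular form of the right-hand side. Since, as noted above, $\lambda_g$ and $c_g$ are uniform over all admissible realizations, the same $R$ and $H$ serve every $x_0 \in B(R_0)$, $u(\cdot) \in \mathcal{U}$, $v(\cdot) \in \mathcal{V}$ simultaneously. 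Setting $\overline{R} = R$ and $\overline{H} = H$ completes the argument.

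The only point requiring genuine care is precisely this uniformity: one must confirm that the bounds of Theorem~\ref{thm_existence} depend on the right-hand side \emph{only} through the constants in $(f.2)$, $(f.3)$ (so that the uniformity of $\lambda_g$, $c_g$ in $u, v$ transfers to $\overline{R}$, $\overline{H}$). The measurability in $(f.1)$ and the Lipschitz and growth estimates in $(f.2)$, $(f.3)$ are then routine consequences of $(g.1)$--$(g.3)$ and the measurability of $u(\cdot)$, $v(\cdot)$.
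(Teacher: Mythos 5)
Your reduction is correct, and it is essentially the only sensible route: the paper itself gives no proof of Proposition~\ref{Prop_motion_u_v_properties}, deferring entirely to \cite{MIGomoyunov_2017}, where the result is obtained in exactly this spirit. Freezing the realizations, setting $f_{u,v}(t,x) = g(t,x,u(t),v(t))$, and checking $(f.1)$--$(f.3)$ from $(g.1)$--$(g.3)$ (with $\lambda_g$ and $c_g$ uniform over $P \times Q$) immediately identifies motions of $(\ref{system_u_v})$, $(\ref{initial_condition_u_v})$ with solutions of $(\ref{system})$, $(\ref{initial_condition})$, so existence and uniqueness follow from Theorem~\ref{thm_existence}. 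The one point you rightly flag --- that $\overline{R}$ and $\overline{H}$ must not depend on the particular realizations --- cannot be read off from the bare statement of Theorem~\ref{thm_existence}, which fixes $f$ in advance; it requires inspecting the proof in \cite{MIGomoyunov_2017}, where $R$ comes from the fractional Bellman--Gronwall lemma using only $c_f$, $R_0$, $T$, $\alpha$, and $H$ from Proposition~\ref{prop_integral_properties} applied to $\varphi = ({}^C D^\alpha x)(\cdot)$ with $\|\varphi(\cdot)\|_\infty \leq (1+R)c_f$. Since you name this dependence explicitly rather than merely asserting uniformity, the argument is complete modulo the cited reference, which is the same level of rigor the paper itself adopts.
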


Let $x_0 \in B(R_0)$ and $h > 0$ be fixed. In accordance with Sect.~\ref{sec_approximation_DE}, let us consider the following approximating system
\begin{equation} \label{system_y_u_v}
    \dot{y}(t) = g\big(t, x_0 + h^{\alpha - 1} (\Delta_h^{1 - \alpha} y) (t), \widetilde{u}(t), \widetilde{v}(t)\big),
    \quad t \in [0, T], \quad
    y(t) \in \mathbb{R}^n, \quad \widetilde{u}(t) \in P, \quad \widetilde{v}(t) \in Q,
\end{equation}
with the initial condition
\begin{equation}\label{initial_condition_y_u_v}
    y(0) = 0.
\end{equation}
Here $y$ is the state vector of the approximating system, $\widetilde{u}$ and $\widetilde{v}$ are control vectors. By a motion of system (\ref{system_y_u_v}), (\ref{initial_condition_y_u_v}) that corresponds to admissible realizations $\widetilde{u}(\cdot) \in \mathcal{U},$ $\widetilde{v}(\cdot) \in \mathcal{V},$ we mean a function $y(\cdot) \in \Lip^0$ which, together with $\widetilde{u}(\cdot)$ and $\widetilde{v}(\cdot),$ satisfies (\ref{system_y_u_v}) for almost every $t \in [0, T].$

By analogy with Proposition~\ref{prop_existence_y_h} and Lemma~\ref{lem_properties_approximating_system}, one can prove the following proposition.
\begin{proposition} \label{Prop_motion_y_u_v_properties} \
    For any $x_0 \in B(R_0),$ $h > 0,$ $\widetilde{u}(\cdot) \in \mathcal{U},$ $\widetilde{v}(\cdot) \in \mathcal{V},$ there exists the unique motion $y(\cdot) = y(\cdot; x_0; h; \widetilde{u}(\cdot), \widetilde{v}(\cdot))$ of approximating system $(\ref{system_y_u_v}),$ $(\ref{initial_condition_y_u_v}).$ Moreover, there exists $\overline{L} > 0$ such that, for any $x_0 \in B(R_0),$ $h > 0,$ $\widetilde{u}(\cdot) \in \mathcal{U},$ $\widetilde{v}(\cdot) \in \mathcal{V},$ the motion $y(\cdot) = y(\cdot; x_0; h; \widetilde{u}(\cdot), \widetilde{v}(\cdot))$ satisfies the inclusion $y(\cdot) \in \Lip^0_{\overline{L}}.$
\end{proposition}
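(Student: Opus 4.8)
The plan is to reduce the assertion to the two results it is modelled on---Proposition~\ref{prop_existence_y_h} and Lemma~\ref{lem_properties_approximating_system}---by freezing the control and disturbance realizations. Fix $x_0 \in B(R_0)$, $h > 0$, $\widetilde{u}(\cdot) \in \mathcal{U}$ and $\widetilde{v}(\cdot) \in \mathcal{V}$, and introduce the function
\begin{equation*}
    \hat{f}(t, x) = g(t, x, \widetilde{u}(t), \widetilde{v}(t)), \quad t \in [0, T], \quad x \in \mathbb{R}^n.
\end{equation*}
With this notation, equation~(\ref{system_y_u_v}) becomes exactly equation~(\ref{system_y_h}) with $f$ replaced by $\hat{f}$. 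Hence the whole statement will follow once $\hat{f}$ is shown to satisfy conditions analogous to $(f.1)$--$(f.3)$, \emph{with constants that are independent of the substituted realizations}.

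First I would check these three conditions. Condition $(f.1)$ holds because $g$ is continuous by $(g.1)$ while the map $t \mapsto (t, x, \widetilde{u}(t), \widetilde{v}(t))$ is measurable for each fixed $x$, so the composition $\hat{f}(\cdot, x)$ is measurable. Condition $(f.2)$ follows from $(g.2)$: for any $r > 0$ and $x, y \in B(r)$ one has $\|\hat{f}(t, x) - \hat{f}(t, y)\| \leq \lambda_g \|x - y\|$ with the \emph{same} $\lambda_g$ for all $t$, since the constant in $(g.2)$ does not depend on $u \in P$ or $v \in Q$. Likewise $(f.3)$ follows from $(g.3)$ with the uniform constant $c_g$.

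Given this, existence and uniqueness of the motion $y(\cdot) = y(\cdot; x_0; h; \widetilde{u}(\cdot), \widetilde{v}(\cdot))$ is obtained precisely as in Proposition~\ref{prop_existence_y_h}, by running the step-by-step method over the successive intervals $[0, h], [h, 2h], \dots$, on each of which conditions $(f.1)$--$(f.3)$ for $\hat{f}$ give a unique absolutely continuous solution and rule out blow-up. For the uniform Lipschitz bound I would transcribe the proof of Lemma~\ref{lem_properties_approximating_system} verbatim, observing that it invokes only the growth condition $(f.3)$. Setting $\bar{a} = \max\{1 + R_0, \|p_{1 - \alpha}(\cdot)\|_1\} c_g$ and $\overline{L} = E_\alpha(\bar{a} T^\alpha) \bar{a}$, the same chain of estimates---bounding $\|h^{\alpha - 1}(\Delta_h^{1 - \alpha} y)(t)\|$ through Proposition~\ref{prop_p_alpha}, then using the monotonicity from Proposition~\ref{Prop_integral_properties_2} and the fractional Bellman--Gronwall lemma---yields $\|\dot{y}(t)\| \leq \overline{L}$ for almost every $t \in [0, T]$, so that $y(\cdot) \in \Lip^0_{\overline{L}}$.

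The only genuinely new point, which I expect to be the crux rather than a technical obstacle, is the \emph{uniformity} of $\overline{L}$ over all $x_0 \in B(R_0)$, $h > 0$, $\widetilde{u}(\cdot) \in \mathcal{U}$ and $\widetilde{v}(\cdot) \in \mathcal{V}$. This is exactly what conditions $(g.2)$ and $(g.3)$ provide: their constants $\lambda_g$ and $c_g$ depend on $r$ but not on the control and disturbance values, so $\bar{a}$ and hence $\overline{L}$ are independent of the particular realizations fed into $g$. No further difficulty arises, and the argument is a direct adaptation of the two cited results.
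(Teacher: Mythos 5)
Your proposal is correct and follows exactly the route the paper intends: the paper gives no explicit proof, stating only that the result is obtained ``by analogy with Proposition~\ref{prop_existence_y_h} and Lemma~\ref{lem_properties_approximating_system},'' and your reduction via $\hat{f}(t,x) = g(t,x,\widetilde{u}(t),\widetilde{v}(t))$, the verification of $(f.1)$--$(f.3)$ with constants uniform in the realizations, and the resulting $\overline{L} = E_\alpha(\bar a T^\alpha)\bar a$ with $\bar a = \max\{1+R_0, \|p_{1-\alpha}(\cdot)\|_1\}c_g$ is precisely the content of that analogy.
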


Note that, since the right-hand side of system (\ref{system_u_v}) contains unknown disturbances $v(t),$ then the results from Sect.~\ref{sec_approximation_DE} can not be directly applied here. In order to overcome this difficulty, in the next section, we propose a mutual aiming procedure between initial (\ref{system_u_v}), (\ref{initial_condition_u_v}) and approximating (\ref{system_y_u_v}), (\ref{initial_condition_y_u_v}) systems. This procedure is based on the extremal shift rule (see, e.g., \cite[\S\S~2.4, 8.2]{NNKrasovskii_AISubbotin_1988} and also \cite{MIGomoyunov_2017}) and specifies the way of forming control realizations $u(\cdot) \in \mathcal{U}$ and $\widetilde{v}(\cdot) \in \mathcal{V}$ that guarantees estimate (\ref{thm_approximation_main}) for any disturbance realization $v(\cdot) \in \mathcal{V}$ and any control realization $\widetilde{u}(\cdot) \in \mathcal{U}.$

\section{\; Mutual Aiming Procedure}\label{sec_Procedure}

Let $x_0 \in B(R_0),$ $h > 0$ and
\begin{equation*}
    \Delta = \{\tau_j\}_{j = 1}^{k + 1} \subset [0, T], \quad
    \tau_1 = 0, \quad  \tau_{j+1} > \tau_j, \quad j \in \overline{1, k}, \quad \tau_{k+1} = T, \quad k \in \mathbb{N},
\end{equation*}
be a partition of the segment $[0, T].$ Let us consider the following procedure of forming realizations $u(\cdot) \in \mathcal{U}$ in system (\ref{system_u_v}), (\ref{initial_condition_u_v}) and $\widetilde{v}(\cdot) \in \mathcal{V}$ in approximating system (\ref{system_y_u_v}), (\ref{initial_condition_y_u_v}). Let $j \in \overline{1, k}$ and the values $x(\tau_j)$ in system (\ref{system_u_v}) and $y(t),$ $t \in [0, \tau_j],$ in system (\ref{system_y_u_v}) have already been realized. Then, for $t \in [\tau_j, \tau_{j+1}),$ we define
\begin{equation}\label{procedure}
    \begin{array}{c}
        \begin{array}{rcl}
            u(t) = u_j & \in &
            \displaystyle \argmin{u \in P} \max_{v \in Q}
            \langle \widetilde{s}_j, g(\tau_j, x(\tau_j), u, v) \rangle, \\[0.5em]
            \widetilde{v}(t) = \widetilde{v}_j & \in &
            \displaystyle \argmax{\widetilde{v} \in Q} \min_{\widetilde{u} \in P}
            \langle \widetilde{s}_j, g\big(\tau_j, x_0 + h^{\alpha - 1} (\Delta_h^{1 - \alpha} y) (\tau_j), \widetilde{u}, \widetilde{v}\big) \rangle,
        \end{array} \\
        \widetilde{s}_j = x(\tau_j) - x_0 - h^{\alpha - 1} (\Delta_h^{1 - \alpha} y) (\tau_j).
    \end{array}
\end{equation}

\begin{theorem} \label{thm_closeness} \
    For any $\varepsilon > 0,$ there exist $h_\ast > 0$ and $\delta_\ast > 0$ such that, for any initial value $x_0 \in B(R_0),$ any $h \in (0, h_\ast],$ any partition $\Delta$ with the diameter $\diam(\Delta) \leq \delta_\ast$ and any realizations $v(\cdot) \in \mathcal{V},$ $\widetilde{u}(\cdot) \in \mathcal{U},$ if realizations $u(\cdot) \in \mathcal{U},$ $\widetilde{v}(\cdot) \in \mathcal{V}$ are formed according to mutual aiming procedure $(\ref{procedure}),$ then the corresponding motions $x(\cdot) = x(\cdot; x_0; u(\cdot), v(\cdot))$ of system $(\ref{system_u_v}),$ $(\ref{initial_condition_u_v})$ and $y(\cdot) = y(\cdot; x_0; h; \widetilde{u}(\cdot), \widetilde{v}(\cdot))$ of approximating system $(\ref{system_y_u_v}),$ $(\ref{initial_condition_y_u_v})$ satisfy inequality $(\ref{thm_approximation_main}).$
\end{theorem}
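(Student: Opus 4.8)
The plan is to mirror the Lyapunov-function argument of Theorem~\ref{thm_approximation}, the essential new ingredient being the \emph{extremal shift} estimate that controls the cross-term produced by the unknown disturbance $v(\cdot)$ and the unknown control $\widetilde{u}(\cdot)$. First I would fix the uniform constants. By Propositions~\ref{Prop_motion_u_v_properties} and~\ref{Prop_motion_y_u_v_properties} the motions $x(\cdot)$ and $y(\cdot)$ lie, uniformly in the data, in a bounded set and in $\Lip^0_{\overline{L}}$, respectively; hence, by Propositions~\ref{prop_RL_derivative_Lipshitz} and~\ref{prop_GL_derivative_Lipshitz}, the quantities $(D^{1-\alpha}y)(t)$ and $h^{\alpha-1}(\Delta_h^{1-\alpha}y)(t)$ are uniformly bounded and (the former) H\"{o}lder continuous, all uniformly in $h$. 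This produces a radius $R_1$ containing $x(t)$, the point $\widetilde{x}(t) = x_0 + h^{\alpha-1}(\Delta_h^{1-\alpha}y)(t)$ and $s(t) = x(t) - x_0 - (D^{1-\alpha}y)(t)$, on which $g$ is uniformly continuous by $(g.1)$ and Lipschitz in the state with some constant $\lambda_g$ by $(g.2)$. As before I set $V(t) = \|s(t)\|^2$; from $({}^C D^\alpha x)(t) = g(t,x(t),u(t),v(t))$ and $\dot{y}(t) = (D^\alpha(D^{1-\alpha}y))(t) = g(t,\widetilde{x}(t),\widetilde{u}(t),\widetilde{v}(t))$ I obtain $s(\cdot) \in I^\alpha(\Linf)$ and $(D^\alpha s)(t) = g(t,x(t),u(t),v(t)) - g(t,\widetilde{x}(t),\widetilde{u}(t),\widetilde{v}(t))$, so Corollary~3.2 of \cite{MIGomoyunov_2017} gives $(D^\alpha V)(t) \le 2\langle s(t), (D^\alpha s)(t)\rangle$ for a.e.\ $t$.

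The core step is to estimate the cross-term $\langle s(t), g(t,x(t),u_j,v(t)) - g(t,\widetilde{x}(t),\widetilde{u}(t),\widetilde{v}_j)\rangle$ for $t \in [\tau_j,\tau_{j+1})$. I would \emph{freeze} time and states at the node $\tau_j$ and replace $s(t)$ by the aiming vector $\widetilde{s}_j = x(\tau_j) - \widetilde{x}(\tau_j)$; each such replacement costs a modulus-of-continuity error in $\diam(\Delta)$ (via the H\"{o}lder bounds on $x(\cdot)$ and $(D^{1-\alpha}y)(\cdot)$ together with uniform continuity and Lipschitzness of $g$) plus a term of order $\zeta$, where $\zeta$ bounds $\|(D^{1-\alpha}y)(\tau_j) - h^{\alpha-1}(\Delta_h^{1-\alpha}y)(\tau_j)\|$ and is made small by shrinking $h_\ast$ through Lemma~\ref{lem_RL_GL_derivatives}. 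It then remains to bound the node term $\langle \widetilde{s}_j, g(\tau_j,x(\tau_j),u_j,v(t)) - g(\tau_j,\widetilde{x}(\tau_j),\widetilde{u}(t),\widetilde{v}_j)\rangle$. Here the extremal shift rule enters: the choice of $u_j$ in $(\ref{procedure})$ gives $\langle \widetilde{s}_j, g(\tau_j,x(\tau_j),u_j,v(t))\rangle \le \min_{u}\max_{v}\langle \widetilde{s}_j, g(\tau_j,x(\tau_j),u,v)\rangle$, while the choice of $\widetilde{v}_j$ gives $\langle \widetilde{s}_j, g(\tau_j,\widetilde{x}(\tau_j),\widetilde{u}(t),\widetilde{v}_j)\rangle \ge \max_{\widetilde{v}}\min_{\widetilde{u}}\langle \widetilde{s}_j, g(\tau_j,\widetilde{x}(\tau_j),\widetilde{u},\widetilde{v})\rangle$. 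Subtracting, using the saddle-point condition $(g.4)$ to rewrite $\min_u\max_v = \max_v\min_u$ at the state $x(\tau_j)$, and estimating the difference of the two $\max_v\min_u$ values (at states $x(\tau_j)$ and $\widetilde{x}(\tau_j)$) via the elementary inequality $\max_v\min_u F - \max_v\min_u G \le \sup_{u,v}|F - G|$ with $\sup_{u,v}|F-G| \le \lambda_g\|\widetilde{s}_j\|^2$ (by $(g.2)$ and $\widetilde{s}_j = x(\tau_j)-\widetilde{x}(\tau_j)$), I get the key bound $\langle \widetilde{s}_j, g(\tau_j,x(\tau_j),u_j,v(t)) - g(\tau_j,\widetilde{x}(\tau_j),\widetilde{u}(t),\widetilde{v}_j)\rangle \le \lambda_g\|\widetilde{s}_j\|^2$.

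Combining the frozen estimate with $\|\widetilde{s}_j\|^2 \le V(t) + (\text{terms vanishing with }\diam(\Delta)\text{ and }\zeta)$ yields a pointwise inequality $(D^\alpha V)(t) \le 2\lambda_g V(t) + \eta$ for a.e.\ $t$, where $\eta$ collects all the small corrections and, since every constant above is uniform in $x_0$, $h$, $v(\cdot)$ and $\widetilde{u}(\cdot)$, can be driven below any prescribed threshold by choosing $\delta_\ast$ small (to shrink the $\diam(\Delta)$-terms) and $h_\ast$ small (to shrink $\zeta$). From here the argument is identical to Theorem~\ref{thm_approximation}: integrating via Proposition~\ref{prop_integral_properties} and applying the fractional Bellman--Gronwall lemma gives $V(t) \le \eta T^\alpha E_\alpha(2\lambda_g T^\alpha)/\Gamma(\alpha+1) \le \varepsilon^2/4$, hence $\|x(t) - x_0 - (D^{1-\alpha}y)(t)\| \le \varepsilon/2$, and a final application of Lemma~\ref{lem_RL_GL_derivatives} (replacing $(D^{1-\alpha}y)(t)$ by $h^{\alpha-1}(\Delta_h^{1-\alpha}y)(t)$ up to $\varepsilon/2$) delivers $(\ref{thm_approximation_main})$.

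The main obstacle I expect is the extremal shift step and its coupling to the freezing argument: one must simultaneously carry out the minimax manipulation cleanly, so that the node cross-term is controlled by $\lambda_g\|\widetilde{s}_j\|^2$ using only $(g.2)$ and $(g.4)$, and verify that all errors incurred in passing from $(t,x(t),\widetilde{x}(t),s(t))$ to $(\tau_j,x(\tau_j),\widetilde{x}(\tau_j),\widetilde{s}_j)$ are uniform in $x_0$, $h$, $v(\cdot)$ and $\widetilde{u}(\cdot)$ and tend to zero with $\diam(\Delta)$ and $h$. It is precisely this uniformity that allows $\eta$ to be made arbitrarily small \emph{independently} of the adversarial realizations, which is what the statement of the theorem requires.
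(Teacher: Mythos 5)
Your proposal is correct and follows essentially the same route as the paper's proof: the same Lyapunov function $V=\|s\|^2$ with the fractional derivative estimate, the same freezing of time, states and the aiming vector at the partition nodes with errors controlled by $\diam(\Delta)$ and by the Gr\"{u}nwald--Letnikov approximation error $\zeta$, the same extremal-shift/minimax manipulation via the Hamiltonian and condition $(g.4)$ leading to the bound $\lambda_g\|\widetilde{s}_j\|^2$, and the same fractional Bellman--Gronwall conclusion. No substantive differences from the paper's argument.
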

\begin{proof} \
    According to Propositions~\ref{prop_RL_derivative_Lipshitz} and~\ref{prop_GL_derivative_Lipshitz}, by the number $\overline{L} > 0$ from Proposition~\ref{Prop_motion_y_u_v_properties}, let us choose $M > 0$ and $\overline{K} > 0,$ $\overline{M} > 0$ such that, for any $h > 0$ and any $y(\cdot) \in \Lip_{\overline{L}}^0,$ the inequalities (\ref{thm_a_p_M}) and
    \begin{equation} \label{thm_a_c_K}
            \|h^{1 - \alpha} (\Delta_h^{1 - \alpha} y)(t) - h^{1 - \alpha} (\Delta_h^{1 - \alpha} y)(\tau) \|
            \leq \overline{K} |t - \tau|^\alpha, \quad t, \tau \in [0, T],
    \end{equation}
    are valid. Let $\overline{R} > 0,$ $\overline{H} > 0$ be taken from Proposition~\ref{Prop_motion_u_v_properties} and $\overline{R}_1 = \overline{R} + R_0 + M + \overline{M},$ $\overline{H}_1 = \overline{H} + \overline{K}.$ By the number $\overline{R}_1,$ let us choose $\lambda_g > 0$ according to ($g.2$).
    Let $\varepsilon > 0$ be fixed. Let $\eta > 0$ satisfy the first inequality in (\ref{thm_a_p_eta_zeta}) where we substitute $\lambda_g$ instead of $\lambda_f,$ and $\zeta > 0$ be such that
    \begin{equation*}
        \zeta \leq \min \big\{ \eta / (32 (1 + \overline{R}_1) c_g), \eta/(32 \lambda_g \overline{R}_1) \big\},
    \end{equation*}
    where $c_g$ is the constant from $(g.3).$ Let $h_\ast > 0$ be chosen by Lemma~\ref{lem_RL_GL_derivatives} such that, for any $h \in (0, h_\ast]$ and any $y(\cdot) \in \Lip^0_{\overline{L}},$ inequality (\ref{thm_a_p_h_ast}) holds. Let $\delta_\ast > 0$ be such that $\delta_\ast^\alpha \leq \zeta/\overline{H}_1$ and, due to ($g.1$), for any $t, \tau \in [0, T],$ $x, y \in B(\overline{R}_1),$ $u \in P,$ $v \in Q,$ if $|t - \tau| \leq \delta_\ast$ and $\|x - y\| \leq \overline{H}_1 \delta_\ast^\alpha,$ then
    \begin{equation*}
        \|g(t, x, u, v) - g(\tau, y, u, v)\| \leq \eta / (16 \overline{R}_1).
    \end{equation*}
    Let us show that these $h_\ast$ and $\delta_\ast$ satisfy the statement of the theorem.

    Let $x_0 \in B(R_0,)$ $h \in (0, h_\ast]$ and $\Delta$ be a partition with the diameter $\delta = \diam(\Delta) \leq \delta_\ast.$ Let $v(\cdot) \in \mathcal{V},$ $\widetilde{u}(\cdot) \in \mathcal{U},$ and realizations $u(\cdot) \in \mathcal{U},$ $\widetilde{v}(\cdot) \in \mathcal{V}$ be formed according to (\ref{procedure}). Let $x(\cdot) = x(\cdot; x_0; u(\cdot), v(\cdot))$ and $y(\cdot) = y(\cdot; x_0; h; \widetilde{u}(\cdot), \widetilde{v}(\cdot))$ be, respectively, the motions of systems (\ref{system_u_v}), (\ref{initial_condition_u_v}) and (\ref{system_y_u_v}), (\ref{initial_condition_y_u_v}). Note that, by the choice of $\overline{L},$ the inclusion $y(\cdot) \in \Lip_{\overline{L}}^0$ holds, and therefore, relations (\ref{thm_a_p_M}), (\ref{thm_a_p_h_ast}) and (\ref{thm_a_c_K}) are valid. Let us consider the functions $s(\cdot),$ $\widetilde{x}(\cdot)$ and $V(\cdot)$ defined by (\ref{thm_a_p_s})--(\ref{thm_a_p_V}). In accordance with the proof of Theorem~\ref{thm_approximation}, in order to prove inequality (\ref{thm_approximation_main}), by the choice of $\eta$ and $h_\ast,$ it is sufficient to show that inequality (\ref{thm_a_p_DV_estimate_final}) is valid where we substitute $\lambda_g$ instead of $\lambda_f,$ i.e.,
    \begin{equation} \label{thm_c_p_estimate_DV_final}
        (D^\alpha V)(t) \leq 2 \lambda_g V(t) + \eta \text{ for a.e. } t \in [0, T].
    \end{equation}

    By analogy with (\ref{thm_a_p_DV_estimate}), for almost every $t \in [0, T],$ we have
    \begin{equation} \label{thm_c_p_estimate}
        (D^\alpha V)(t) \leq 2 \langle s(t), g(t, x(t), u(t), v(t)) \rangle
        - 2 \langle s(t), g(t, \widetilde{x}(t), \widetilde{u}(t), \widetilde{v}(t))\rangle.
    \end{equation}
    Let us fix $j \in \overline{1, k}$ and estimate the right-hand side of this inequality for $t \in [\tau_j, \tau_{j+1}).$ Note that, by the choice of $\overline{R}_1,$
    \begin{equation*}
        \max \big\{\|x(t)\|, \|x(\tau_j)\|, \|\widetilde{x}(t)\|, \|\widetilde{x}(\tau_j)\|, \|s(t)\|, \|\widetilde{s}_j\| \big\} \leq \overline{R}_1,
    \end{equation*}
    where $\widetilde{s}_j$ is defined according to (\ref{procedure}), and, by the choice of $\overline{H}_1,$ $\delta_\ast$ and $h_\ast,$
    \begin{equation*}
        \begin{array}{c}
            \|x(t) - x(\tau_j)\| \leq \overline{H} \delta^\alpha \leq \overline{H}_1 \delta^\alpha, \quad
            \|\widetilde{x}(t) - \widetilde{x}(\tau_j)\| \leq \overline{K} \delta^\alpha \leq \overline{H}_1 \delta^\alpha, \\[0.5em]
            \|s(t) - \widetilde{s}_j\|
            = \|x(t) - (D^{1 - \alpha} y)(t) - x(\tau_j) + h^{\alpha - 1} (\Delta_h^{1 - \alpha} y) (\tau_j)\| \\[0.5em]
            \leq \|x(t) - x(\tau_j)\|
            + \|(D^{1 - \alpha} y)(t) - h^{\alpha - 1} (\Delta_h^{1 - \alpha} y)(t)\| %\\[0.5em]
            + \|h^{\alpha - 1} (\Delta_h^{1 - \alpha} y)(t) - h^{\alpha - 1} (\Delta_h^{1 - \alpha} y) (\tau_j)\|
            \leq \overline{H}_1 \delta^\alpha + \zeta \leq 2 \zeta.
        \end{array}
    \end{equation*}
    Estimating the first term in (\ref{thm_c_p_estimate}), by $(g.3)$ and the choice of $\zeta,$ we derive
    \begin{equation} \label{thm_c_p_estimate_fst_1}
        \begin{array}{c}
            \langle s(t), g(t, x(t), u(t), v(t)) \rangle
            \leq \langle \widetilde{s}_j, g(t, x(t), u(t), v(t)) \rangle
            + \|s(t) - \widetilde{s}_j\| \|g(t, x(t), u(t), v(t))\| \\[0.5em]
            \leq \langle \widetilde{s}_j, g(t, x(t), u(t), v(t)) \rangle
            + 2 (1 + \overline{R}_1) c_g \zeta
            \leq \langle \widetilde{s}_j, g(t, x(t), u(t), v(t)) \rangle + \eta/16,
        \end{array}
    \end{equation}
    Further, due to the choice of $\delta_\ast,$ we obtain
    \begin{equation} \label{thm_c_p_estimate_fst_2}
        \begin{array}{c}
            \langle \widetilde{s}_j, g(t, x(t), u(t), v(t)) \rangle
            \leq \langle \widetilde{s}_j, g(\tau_j, x(\tau_j), u(t), v(t)) \rangle
            + \|\widetilde{s}_j\| \|g(t, x(t), u(t), v(t)) - g(\tau_j, x(\tau_j), u(t), v(t)) \| \\[0.5em]
            \leq \langle \widetilde{s}_j, g(\tau_j, x(\tau_j), u(t), v(t)) \rangle + \eta/16.
        \end{array}
    \end{equation}
    Let us consider the Hamiltonian of system (\ref{system_u_v}) defined by
    \begin{equation*}
        H(t, x, s) = \min_{u \in P} \max_{v \in Q} \langle s, g(t, x, u, v) \rangle, \quad t \in [0, T], \quad x, s \in \mathbb{R}^n.
    \end{equation*}
    Then, according to the choice of $u(t) = u_j$ in (\ref{procedure}), we have
    \begin{equation} \label{thm_c_p_estimate_fst_3}
        \langle \widetilde{s}_j, g(\tau_j, x(\tau_j), u(t), v(t)) \rangle
        \leq \max_{v \in Q} \langle \widetilde{s}_j, g(\tau_j, x(\tau_j), u_j, v) \rangle
        = H(\tau_j, x(\tau_j), \widetilde{s}_j).
    \end{equation}
    Therefore, from (\ref{thm_c_p_estimate_fst_1})--(\ref{thm_c_p_estimate_fst_3}) we deduce
    \begin{equation} \label{thm_c_p_estimate_fst_main}
        \langle s(t), g(t, x(t), u(t), v(t)) \rangle \leq H(\tau_j, x(\tau_j), \widetilde{s}_j) + \eta/8.
    \end{equation}
    Due to $(g.4),$ for the second term in (\ref{thm_c_p_estimate}), one can similarly obtain
    \begin{equation} \label{thm_c_p_estimate_snd_main}
        \langle s(t), g(t, \widetilde{x}(t), u(t), v(t)) \rangle
        \geq H(\tau_j, \widetilde{x}(\tau_j), \widetilde{s}_j) - \eta/8.
    \end{equation}
    Further, by the choice of $\lambda_g,$ we have
    \begin{equation} \label{thm_c_p_estimate_H-H}
        H(\tau_j, x(\tau_j), \widetilde{s}_j) - H(\tau_j, \widetilde{x}(\tau_j), \widetilde{s}_j)
        \leq \lambda_g \|x(\tau_j) - \widetilde{x}(\tau_j)\| \|\widetilde{s}_j\|
        = \lambda_g \|\widetilde{s}_j\|^2.
    \end{equation}
    Note that, by the choice of $\zeta,$ the following inequality is valid:
    \begin{equation} \label{thm_c_p_estimate_s_j}
        \|\widetilde{s}_j\|^2 \leq \|s(t)\|^2 + 2 (\|s(t) - \widetilde{s}_j\| + 2 \|s(t)\|) \|s(t) - \widetilde{s}_j\|
        \leq V(t) + 8 \overline{R}_1 \zeta \leq V(t) + \eta/(4 \lambda_g).
    \end{equation}
    From (\ref{thm_c_p_estimate}) and (\ref{thm_c_p_estimate_fst_main})--(\ref{thm_c_p_estimate_s_j}) we derive (\ref{thm_c_p_estimate_DV_final}). The theorem is proved.
\end{proof}

\section{\; Example~2}\label{sec_Example_2}

Let us illustrate Theorem~\ref{thm_closeness} by an example. Let a motion of the conflict-controlled dynamical system be described by the fractional differential equations
\begin{equation} \label{system_u_v_ex}
    \begin{array}{c}
        \begin{cases}
            (^C D^{0.7} x_1) (t) = x_2(t), \\[0.5em]
            (^C D^{0.7} x_2) (t) = - 0.5 \, \sin(x_1(t)) + 0.5 \, u(t) + 0.5 \, v(t),
        \end{cases} \\[1.7em]
        t \in [0, 10], \quad x(t) = (x_1(t), x_2(t)) \in \mathbb{R}^2, \quad u(t) \in [-1, 1], \quad v(t) \in [-1, 1],
    \end{array}
\end{equation}
with the initial condition
\begin{equation}\label{initial_condition_u_v_ex}
    x(0) = x_0 = (0, 0.5).
\end{equation}
For system (\ref{system_u_v_ex}), (\ref{initial_condition_u_v_ex}), the corresponding approximating system (\ref{system_y_u_v}), (\ref{initial_condition_y_u_v}) was considered, and mutual aiming procedure (\ref{procedure}) between these systems was simulated. Namely, realizations $u(\cdot)$ in the initial system and $\widetilde{v}(\cdot)$ in the approximating system were formed according to (\ref{procedure}), while realizations $v(\cdot)$ and $\widetilde{u}(\cdot)$ were chosen as follows:
\begin{equation*}
    v(t) = \sin(3 \, t), \quad \widetilde{u}(t) = \cos(2 \, t), \quad t \in [0, 10].
\end{equation*}
As in Example~1, for the numerical construction of the motions, we use the forward Euler methods with the step $0.001.$

The following two cases were considered. In the first one, we choose $h = 0.1$ and the partition $\Delta$ of the segment $[0, 10]$ with the constant step $\delta = 0.02.$ The obtained difference between the realized motion $x(\cdot)$ of system (\ref{system_u_v_ex}), (\ref{initial_condition_u_v_ex}) and its approximation $\widetilde{x}(t) = x_0 + h^{-0.3} (\Delta_h^{0.3} y)(t),$ $t \in [0, 10],$ is
\begin{equation*}
    \max_{t \in [0, 10]} \|x(t) - \widetilde{x}(t)\| \approx 0.114.
\end{equation*}
In the second case, we choose $h = 0.01,$ $\delta = 0.005$ and obtain
\begin{equation*}
    \max_{t \in [0, 10]} \|x(t) - \widetilde{x}(t)\| \approx 0.046.
\end{equation*}
The simulations results are shown below in Fig.~6.1.

\vspace*{-2.5em}

\begin{center}
    \hspace*{-2em}
    % GNUPLOT: LaTeX picture with Postscript
\begingroup
  \makeatletter
  \providecommand\color[2][]{%
    \GenericError{(gnuplot) \space\space\space\@spaces}{%
      Package color not loaded in conjunction with
      terminal option `colourtext'%
    }{See the gnuplot documentation for explanation.%
    }{Either use 'blacktext' in gnuplot or load the package
      color.sty in LaTeX.}%
    \renewcommand\color[2][]{}%
  }%
  \providecommand\includegraphics[2][]{%
    \GenericError{(gnuplot) \space\space\space\@spaces}{%
      Package graphicx or graphics not loaded%
    }{See the gnuplot documentation for explanation.%
    }{The gnuplot epslatex terminal needs graphicx.sty or graphics.sty.}%
    \renewcommand\includegraphics[2][]{}%
  }%
  \providecommand\rotatebox[2]{#2}%
  \@ifundefined{ifGPcolor}{%
    \newif\ifGPcolor
    \GPcolorfalse
  }{}%
  \@ifundefined{ifGPblacktext}{%
    \newif\ifGPblacktext
    \GPblacktexttrue
  }{}%
  % define a \g@addto@macro without @ in the name:
  \let\gplgaddtomacro\g@addto@macro
  % define empty templates for all commands taking text:
  \gdef\gplbacktext{}%
  \gdef\gplfronttext{}%
  \makeatother
  \ifGPblacktext
    % no textcolor at all
    \def\colorrgb#1{}%
    \def\colorgray#1{}%
  \else
    % gray or color?
    \ifGPcolor
      \def\colorrgb#1{\color[rgb]{#1}}%
      \def\colorgray#1{\color[gray]{#1}}%
      \expandafter\def\csname LTw\endcsname{\color{white}}%
      \expandafter\def\csname LTb\endcsname{\color{black}}%
      \expandafter\def\csname LTa\endcsname{\color{black}}%
      \expandafter\def\csname LT0\endcsname{\color[rgb]{1,0,0}}%
      \expandafter\def\csname LT1\endcsname{\color[rgb]{0,1,0}}%
      \expandafter\def\csname LT2\endcsname{\color[rgb]{0,0,1}}%
      \expandafter\def\csname LT3\endcsname{\color[rgb]{1,0,1}}%
      \expandafter\def\csname LT4\endcsname{\color[rgb]{0,1,1}}%
      \expandafter\def\csname LT5\endcsname{\color[rgb]{1,1,0}}%
      \expandafter\def\csname LT6\endcsname{\color[rgb]{0,0,0}}%
      \expandafter\def\csname LT7\endcsname{\color[rgb]{1,0.3,0}}%
      \expandafter\def\csname LT8\endcsname{\color[rgb]{0.5,0.5,0.5}}%
    \else
      % gray
      \def\colorrgb#1{\color{black}}%
      \def\colorgray#1{\color[gray]{#1}}%
      \expandafter\def\csname LTw\endcsname{\color{white}}%
      \expandafter\def\csname LTb\endcsname{\color{black}}%
      \expandafter\def\csname LTa\endcsname{\color{black}}%
      \expandafter\def\csname LT0\endcsname{\color{black}}%
      \expandafter\def\csname LT1\endcsname{\color{black}}%
      \expandafter\def\csname LT2\endcsname{\color{black}}%
      \expandafter\def\csname LT3\endcsname{\color{black}}%
      \expandafter\def\csname LT4\endcsname{\color{black}}%
      \expandafter\def\csname LT5\endcsname{\color{black}}%
      \expandafter\def\csname LT6\endcsname{\color{black}}%
      \expandafter\def\csname LT7\endcsname{\color{black}}%
      \expandafter\def\csname LT8\endcsname{\color{black}}%
    \fi
  \fi
    \setlength{\unitlength}{0.0500bp}%
    \ifx\gptboxheight\undefined%
      \newlength{\gptboxheight}%
      \newlength{\gptboxwidth}%
      \newsavebox{\gptboxtext}%
    \fi%
    \setlength{\fboxrule}{0.5pt}%
    \setlength{\fboxsep}{1pt}%
\begin{picture}(4896.00,5328.00)%
    \gplgaddtomacro\gplbacktext{%
      \csname LTb\endcsname%
      \put(726,1295){\makebox(0,0)[r]{\strut{}$-1$}}%
      \csname LTb\endcsname%
      \put(726,2023){\makebox(0,0)[r]{\strut{}$-0.5$}}%
      \csname LTb\endcsname%
      \put(726,2752){\makebox(0,0)[r]{\strut{}$0$}}%
      \csname LTb\endcsname%
      \put(726,3480){\makebox(0,0)[r]{\strut{}$0.5$}}%
      \csname LTb\endcsname%
      \put(726,4208){\makebox(0,0)[r]{\strut{}$x$}}%
      \csname LTb\endcsname%
      \put(858,1075){\makebox(0,0){\strut{}$0$}}%
      \csname LTb\endcsname%
      \put(1586,1075){\makebox(0,0){\strut{}$2$}}%
      \csname LTb\endcsname%
      \put(2314,1075){\makebox(0,0){\strut{}$4$}}%
      \csname LTb\endcsname%
      \put(3043,1075){\makebox(0,0){\strut{}$6$}}%
      \csname LTb\endcsname%
      \put(3771,1075){\makebox(0,0){\strut{}$8$}}%
      \csname LTb\endcsname%
      \put(4499,1075){\makebox(0,0){\strut{}$t$}}%
    }%
    \gplgaddtomacro\gplfronttext{%
      \csname LTb\endcsname%
      \put(3188,1840){\makebox(0,0)[r]{\strut{}$x_1$}}%
      \csname LTb\endcsname%
      \put(3188,1620){\makebox(0,0)[r]{\strut{}$x_2$}}%
      \csname LTb\endcsname%
      \put(4135,1840){\makebox(0,0)[r]{\strut{}$\widetilde{x}_1$}}%
      \csname LTb\endcsname%
      \put(4135,1620){\makebox(0,0)[r]{\strut{}$\widetilde{x}_2$}}%
    }%
    \gplbacktext
    \put(0,0){\includegraphics{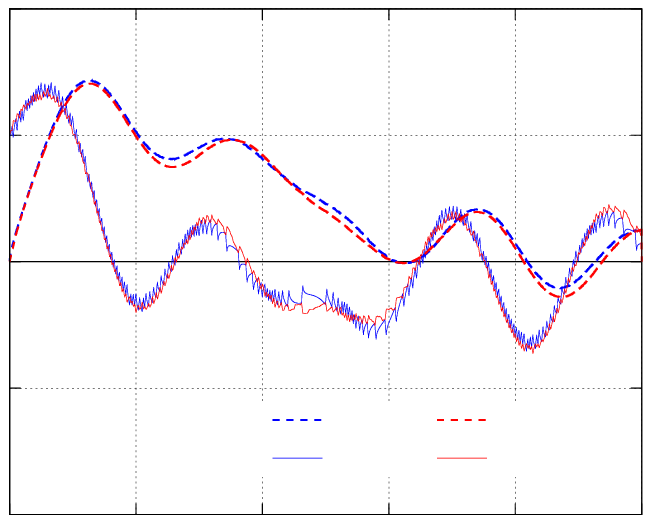}}%
    \gplfronttext
  \end{picture}%
\endgroup

    \hspace*{-1em}
    % GNUPLOT: LaTeX picture with Postscript
\begingroup
  \makeatletter
  \providecommand\color[2][]{%
    \GenericError{(gnuplot) \space\space\space\@spaces}{%
      Package color not loaded in conjunction with
      terminal option `colourtext'%
    }{See the gnuplot documentation for explanation.%
    }{Either use 'blacktext' in gnuplot or load the package
      color.sty in LaTeX.}%
    \renewcommand\color[2][]{}%
  }%
  \providecommand\includegraphics[2][]{%
    \GenericError{(gnuplot) \space\space\space\@spaces}{%
      Package graphicx or graphics not loaded%
    }{See the gnuplot documentation for explanation.%
    }{The gnuplot epslatex terminal needs graphicx.sty or graphics.sty.}%
    \renewcommand\includegraphics[2][]{}%
  }%
  \providecommand\rotatebox[2]{#2}%
  \@ifundefined{ifGPcolor}{%
    \newif\ifGPcolor
    \GPcolorfalse
  }{}%
  \@ifundefined{ifGPblacktext}{%
    \newif\ifGPblacktext
    \GPblacktexttrue
  }{}%
  % define a \g@addto@macro without @ in the name:
  \let\gplgaddtomacro\g@addto@macro
  % define empty templates for all commands taking text:
  \gdef\gplbacktext{}%
  \gdef\gplfronttext{}%
  \makeatother
  \ifGPblacktext
    % no textcolor at all
    \def\colorrgb#1{}%
    \def\colorgray#1{}%
  \else
    % gray or color?
    \ifGPcolor
      \def\colorrgb#1{\color[rgb]{#1}}%
      \def\colorgray#1{\color[gray]{#1}}%
      \expandafter\def\csname LTw\endcsname{\color{white}}%
      \expandafter\def\csname LTb\endcsname{\color{black}}%
      \expandafter\def\csname LTa\endcsname{\color{black}}%
      \expandafter\def\csname LT0\endcsname{\color[rgb]{1,0,0}}%
      \expandafter\def\csname LT1\endcsname{\color[rgb]{0,1,0}}%
      \expandafter\def\csname LT2\endcsname{\color[rgb]{0,0,1}}%
      \expandafter\def\csname LT3\endcsname{\color[rgb]{1,0,1}}%
      \expandafter\def\csname LT4\endcsname{\color[rgb]{0,1,1}}%
      \expandafter\def\csname LT5\endcsname{\color[rgb]{1,1,0}}%
      \expandafter\def\csname LT6\endcsname{\color[rgb]{0,0,0}}%
      \expandafter\def\csname LT7\endcsname{\color[rgb]{1,0.3,0}}%
      \expandafter\def\csname LT8\endcsname{\color[rgb]{0.5,0.5,0.5}}%
    \else
      % gray
      \def\colorrgb#1{\color{black}}%
      \def\colorgray#1{\color[gray]{#1}}%
      \expandafter\def\csname LTw\endcsname{\color{white}}%
      \expandafter\def\csname LTb\endcsname{\color{black}}%
      \expandafter\def\csname LTa\endcsname{\color{black}}%
      \expandafter\def\csname LT0\endcsname{\color{black}}%
      \expandafter\def\csname LT1\endcsname{\color{black}}%
      \expandafter\def\csname LT2\endcsname{\color{black}}%
      \expandafter\def\csname LT3\endcsname{\color{black}}%
      \expandafter\def\csname LT4\endcsname{\color{black}}%
      \expandafter\def\csname LT5\endcsname{\color{black}}%
      \expandafter\def\csname LT6\endcsname{\color{black}}%
      \expandafter\def\csname LT7\endcsname{\color{black}}%
      \expandafter\def\csname LT8\endcsname{\color{black}}%
    \fi
  \fi
    \setlength{\unitlength}{0.0500bp}%
    \ifx\gptboxheight\undefined%
      \newlength{\gptboxheight}%
      \newlength{\gptboxwidth}%
      \newsavebox{\gptboxtext}%
    \fi%
    \setlength{\fboxrule}{0.5pt}%
    \setlength{\fboxsep}{1pt}%
\begin{picture}(4896.00,5328.00)%
    \gplgaddtomacro\gplbacktext{%
      \csname LTb\endcsname%
      \put(726,1295){\makebox(0,0)[r]{\strut{}$-1$}}%
      \csname LTb\endcsname%
      \put(726,2023){\makebox(0,0)[r]{\strut{}$-0.5$}}%
      \csname LTb\endcsname%
      \put(726,2752){\makebox(0,0)[r]{\strut{}$0$}}%
      \csname LTb\endcsname%
      \put(726,3480){\makebox(0,0)[r]{\strut{}$0.5$}}%
      \csname LTb\endcsname%
      \put(726,4208){\makebox(0,0)[r]{\strut{}$x$}}%
      \csname LTb\endcsname%
      \put(858,1075){\makebox(0,0){\strut{}$0$}}%
      \csname LTb\endcsname%
      \put(1586,1075){\makebox(0,0){\strut{}$2$}}%
      \csname LTb\endcsname%
      \put(2314,1075){\makebox(0,0){\strut{}$4$}}%
      \csname LTb\endcsname%
      \put(3043,1075){\makebox(0,0){\strut{}$6$}}%
      \csname LTb\endcsname%
      \put(3771,1075){\makebox(0,0){\strut{}$8$}}%
      \csname LTb\endcsname%
      \put(4499,1075){\makebox(0,0){\strut{}$t$}}%
    }%
    \gplgaddtomacro\gplfronttext{%
      \csname LTb\endcsname%
      \put(3188,1840){\makebox(0,0)[r]{\strut{}$x_1$}}%
      \csname LTb\endcsname%
      \put(3188,1620){\makebox(0,0)[r]{\strut{}$x_2$}}%
      \csname LTb\endcsname%
      \put(4135,1840){\makebox(0,0)[r]{\strut{}$\widetilde{x}_1$}}%
      \csname LTb\endcsname%
      \put(4135,1620){\makebox(0,0)[r]{\strut{}$\widetilde{x}_2$}}%
    }%
    \gplbacktext
    \put(0,0){\includegraphics{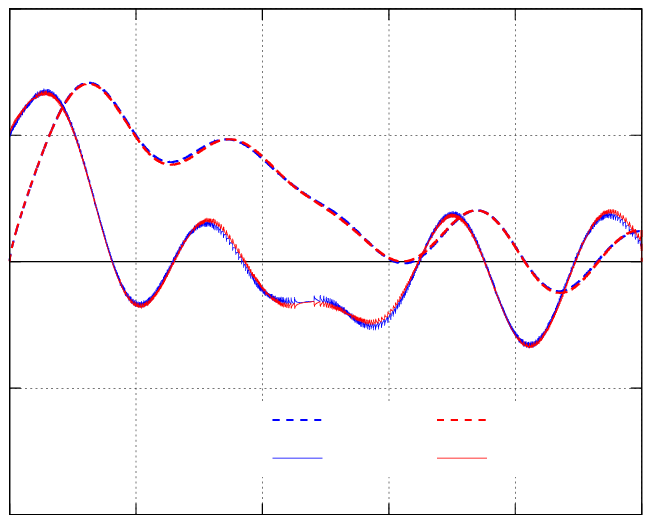}}%
    \gplfronttext
  \end{picture}%
\endgroup

    \hspace*{-2em}

    \vspace*{-1.8em}

    Fig. 6.1: The realized motions $x(\cdot)$ of system (\ref{system_u_v_ex}), (\ref{initial_condition_u_v_ex}) and their approximations $\widetilde{x}(\cdot)$ for $h = 0.1,$ $\delta = 0.02$ and $h = 0.01,$ $\delta = 0.005.$
\end{center}

\section{\; Conclusion} \label{sec_Conclusion}

In the paper, a conflict-controlled dynamical system described by a fractional differential equation with the Caputo derivative of an order $\alpha \in (0, 1)$ is considered. An approximation of this system by a system described by a functional-differential equation of a retarded type with the usual first order derivative is proposed. In order to ensure the desired proximity between the initial and the approximating systems, a mutual aiming procedure is elaborated. In the further applications, the obtained results can be used for reducing different control problems in fractional order systems, including control problems under disturbances and differential games, to control problems in functional-differential systems (see, e.g., \cite{YuSOsipov_1971,NYuLukoyanov_2001,NYuLukoyanov_2003}). Namely, one can consider control schemes that use the approximating system as a modelling guide (see, e.g., \cite[\S~8.2]{NNKrasovskii_AISubbotin_1988} and also \cite{NNKrasovskii_ANKotelnikova_2012,NYuLukoyanov_ARPlaksin_2015,NYuLukoyanov_ARPlaksin_2016}). In this case, control actions $u(t)$ in the initial system and $\widetilde{v}(t)$ in the approximating system are chosen according to the mutual aiming procedure, and the desired quality of a control process is attained due to the choice of control actions $\widetilde{u}(t)$ in the approximating system. In particular, such an approach allows, via the proposed approximations, to develop theory and numerical methods for solving different control problems in fractional order systems.

\vskip 2mm

\emergencystretch=\hsize

\begin{center}
\rule{6 cm}{0.02 cm}
\end{center}

\end{document}


\begin{thebibliography}{99}

\bibitem{SGSamko_AAKilbas_OIMarichev_1993}
 S. G. Samko, A. A. Kilbas, O. I. Marichev,
 \textit{Fractional Integrals and Derivatives. Theory and Applications},
 Gordon \& Breach Sci. Publishers, 1993.

\bibitem{IPodlubny_1999}
 I. Podlubny,
 \textit{Fractional Differential Equations},
 Academic Press, New York, 1999.

\bibitem{AAKilbas_HMSrivastava_JJTrujillo_2006}
 A. A. Kilbas, H. M. Srivastava, J. J. Trujillo,
 \textit{Theory and Applications of Fractional Differential Equations},
 Elsevier, 2006.

\bibitem{KDiethelm_2010}
 K. Diethelm,
 \textit{The Analysis of Fractional Differential Equations},
 Springer-Verlag, Berlin, Heidelberg, 2010.

\bibitem{CLi_FZeng_2015}
 C. Li, F. Zeng,
 \textit{Numerical Methods for Fractional Calculus},
 Chapman and Hall, New York, 2015.

\bibitem{JKHale_SMVLunel_1993}
 J. K. Hale, S. M. V. Lunel,
 \textit{Introduction to Functional Differential Equations},
 Springer, New York, 1993.

\bibitem{MIGomoyunov_2017}
 M. I. Gomoyunov,
 Fractional derivatives of convex Lyapunov functions and control problems in fractional order systems,
 \textit{arXiv:1710.07003} (2017). (submitted to \textit{Frac. Calc. Appl. Anal.})

\bibitem{NNKrasovskii_ANKotelnikova_2012}
 N. N. Krasovskii, A. N. Kotelnikova,
 Stochastic guide for a time-delay object in a positional differential game,
 \textit{Proc. Steklov Inst. Math.} \textbf{277}, suppl. 1, 145--151 (2012).

\bibitem{NYuLukoyanov_ARPlaksin_2015}
 N. Yu. Lukoyanov, A. R. Plaksin,
 On approximations of time-delay control systems,
 \textit{IFAC-PapersOnLine}, \textbf{28} (25), 178--182 (2015).

\bibitem{NYuLukoyanov_ARPlaksin_2016}
 N. Yu. Lukoyanov, A. R. Plaksin,
 On the approximation of nonlinear conflict-controlled systems of neutral type,
 \textit{Proc. Steklov Inst. Math.} \textbf{292}, suppl. 1, 182--195 (2016).

\bibitem{NNKrasovskii_AISubbotin_1988}
 N. N. Krasovskii, A. I. Subbotin,
 \textit{Game-Theoretical Control Problems},
 Springer-Verlag, New York, 1988.

\bibitem{YuSOsipov_1971}
 Yu. S. Osipov,
 Differential games of systems with aftereffect,
 \textit{Dokl. Akad. Nauk SSSR}, \textbf{196} (4), 779--782 (1971).

\bibitem{NYuLukoyanov_2001}
 N. Yu. Lukoyanov,
 The properties of the value functional of a differential game with hereditary information,
 \textit{J. Appl. Math. Mech.} \textbf{65} (3), 361--370 (2001).

\bibitem{NYuLukoyanov_2003}
 N. Yu. Lukoyanov,
 Functional Hamilton-Jacobi type equations with ci-derivatives in control problems with hereditary information,
 \textit{Nonlinear Funct. Anal. Appl.} \textbf{8} (4), 535--555 (2003).

\bibitem{UWestphal_1974}
 U. Westphal,
 An approach to fractional powers of operators via fractional differences,
 \textit{Proc. London Math. Soc.} \textbf{29} (3), 557--576 (1974).

\bibitem{RADeVore_GGLorentz_1993}
 R. A. DeVore, G. G. Lorentz,
 \textit{Constructive Approximation},
 Springer, Berlin, 1993.

\bibitem{LVKantorovich_GPAkilov_1982}
 L. V. Kantorovich, G. P. Akilov,
 \textit{Functional Analisys},
 Pergamon Press, Oxford, 1982.

\bibitem{AMZverkin_GAKemenskii_SBNorkin_LEElsgolts_1962}
 A. M. Zverkin, G. A. Kemenskii, S. B. Norkin and L. E. El'sgol'ts,
 Differential equations with a perturbed argument,
 \textit{Russ. Math. Surv.} \textbf{17} (2), 61--146 (1962).

\bibitem{AVKim_2015}
 A. V. Kim,
 \textit{i--Smooth Analysis: Theory and Applications},
 Wiley, 2015.
\end{thebibliography}
\end{document}